\DeclareFontFamily{U}{matha}{\hyphenchar\font45}
\DeclareFontShape{U}{matha}{m}{n}{
      <5> <6> <7> <8> <9> <10> gen * matha
      <10.95> matha10 <12> <14.4> <17.28> <20.74> <24.88> matha12
      }{}
\DeclareSymbolFont{matha}{U}{matha}{m}{n}
\DeclareFontFamily{U}{mathx}{\hyphenchar\font45}
\DeclareFontShape{U}{mathx}{m}{n}{
      <5> <6> <7> <8> <9> <10>
      <10.95> <12> <14.4> <17.28> <20.74> <24.88>
      mathx10
      }{}
\DeclareSymbolFont{mathx}{U}{mathx}{m}{n}
\DeclareMathSymbol{\obot}         {2}{matha}{"6B}
\DeclareMathSymbol{\bigobot}       {1}{mathx}{"CB}
\renewcommand{\eqref}[1]{Equation~(\ref{#1})}
\newcommand{\NN}{\mathbb N}
\newcommand{\RR}{\mathbb R}
\newcommand{\CC}{\mathbb C}
\newcommand{\CP}{\CC\mathrm P}
\newcommand{\Hil}{\mathscr H}
\newcommand{\T}{T}
\newcommand{\U}{\mathrm U}
\newcommand{\N}{N}
\renewcommand{\P}{\mathrm P}
\newcommand{\vol}{\mathrm{vol}}
\newcommand{\pr}{\mathrm{pr}}
\newcommand{\real}{\mathrm{Re}}
\newcommand{\imaginary}{\mathrm{Im}}
\newcommand{\zcurr}{\zeta}
\newcommand{\icurr}{\sigma}
\newcommand{\pcurr}{\chi}
\newcommand{\subE}{{\!\!\raisebox{-.13em}{$\scriptscriptstyle E$}}}
\newcommand{\subH}{{\!\!\raisebox{-.13em}{$\scriptscriptstyle \Hil$}}}
\newcommand{\laplace}{\Delta^{\!\!\nabla}}
\newcommand{\taut}{\mathscr T}
\numberwithin{equation}{section}  
\newtheoremstyle{style1}
  {10pt}
  {3pt}
  {\it}
  {}
  {\bf\scshape}
  {.}
  {3pt}
  {}
\newtheoremstyle{style2}
  {10pt}
  {3pt}
  {\it}
  {}
  {\bf}
  {:}
  {3pt}
  {}
\theoremstyle{style1}
\newtheorem*{definition*}{Definition}
\newtheorem{theorem}{Theorem}
\newtheorem{lemma}{Lemma}
\theoremstyle{style2}
\begin{document}

\title[Random Zero Currents]{Random Zero Currents of Sections of Hermitian Line Bundles over Compact Riemannian Manifolds}
\author{Felix Kn{\"o}ppel}

\address{Felix Kn\"oppel\\
  Institut f\"ur Mathematik\\ Technische Universit\"at Berlin\\
  Stra{\ss}e des 17.\ Juni 136\\
  10623 Berlin\\ Germany}

\email{knoeppel@math.tu-berlin.de}

\date{\today}
\thanks{The author was supported by DFG SFB/TRR 109 ``Discretization in Geometry and Dynamics'' and the Einstein Foundation Berlin.}

\maketitle

\begin{abstract}
This paper is concerned with zero currents of random section of a Hermitian line bundle \(E\) over a compact oriented Riemannian manifold. Given a metric connection, heat flow yields a natural 1-parameter family of probability measures on the space of smooth sections \(\Gamma E\). It is shown that the corresponding family of random zero currents connects the curvature of the bundle to the ground state zero current.
\end{abstract}

\section{Introduction}

Throughout, let \(E\) denote a Hermitian line bundle with metric connection \(\nabla\) over a compact oriented Riemannian manifold \(M\) of dimension \(m\geq 2\). Generically, a section \(\psi\in\Gamma E\) has simple zeros which form a submanifold \(\Lambda\subset M\) of codimension \(2\). The zero current \(\zcurr_\psi\) of \(\psi\) is then given by integration over \(\Lambda\), i.e. \(\zcurr_\psi = \delta_\Lambda\), where the orientation is such that \(\psi\) restricted to a small positively oriented circle around \(\Lambda\) has positive degree.

In case that \(E\) is a positive holomorphic Hermitian line bundle over a compact complex manifold \(M\) the curvature \(R^\nabla = d^{\nabla} d^{\nabla}\) is obtained as a weak limit of zero currents of randomly chosen holomorphic sections of the tensor powers \(E^N\) \cite{zelditch1998assymptotics,shiffman1999distribution}: For \(\eta\in \Omega^{m-2}M\),
\[
	\lim_{N\to \infty}\tfrac{1}{N}\int_{H^0(M,E^N)} \langle\zcurr |\eta\rangle\, d\mu_N = \tfrac{1}{2\pi}\int_M JR^\nabla\!\!\wedge\eta\,,
\]
where \(\langle.|.\rangle\) denotes the dual pairing and \(\mu_N\) denotes the standard Gaussian measure on the finite-dimensional space of holomorphic sections \(H^0(M;E^N)\).

As observed in experiments \cite{knoeppel2013god} the zero currents of low energy states of the Laplacian \(\laplace = -\star d^\nabla\!\!\star d^\nabla\) concentrate according to curvature as well. In \cite{weissmann2014smoke,eberhardt2017smoke} this observation was used to decompose smoke into smoke rings. Though this decomposition works quite well in practice, it is an open question how exactly the zero current of a low energy state is related to curvature. Our main result---a natural curve of random zero currents which connects curvature on the one hand to the ground state zero current on the other hand---is a small first step towards an answer.

In \cite{nicolaescu2016,nicolaescu2017} Nicolaescu et. al. construct Gaussian measures on the space of smooth sections to prove a stochastic Gauss--Bonnet--Chern formula. The present paper uses somewhat similar but simpler techniques. In our situation, given a Riemannian metric, heat flow provides us a priori with a natural family of Gaussian measures on the Hilbert spaces of square-integrable sections, all of which are concentrated on \(\Gamma E\). The heat kernel embedding then translates the problem to an integral geometric problem in infinite-dimensional complex projective space, which adds a nice geometric flavor.

\section{Main Results}

The heat semigroup of the connection Laplacian \(\laplace\) assigns to each \(t>0\) a self-adjoint trace class operator \(S_t\) on the Hilbert space of square-integrable sections \(\Hil:=\Gamma_{L^2}E\) \cite{berline2003heat},
\[
	S_t\colon \Hil \to \Gamma E \subset \Hil\,.
\]
This gives rise to a family of Gaussian probability measures \(\mu_t\) concentrated on the space of smooth sections \(\Gamma E\). The main goal of this paper is then to prove the following theorem.

\begin{theorem}\label{mainthm:limits-of-zero-distributions}
	Let \(R^\nabla\) denote the curvature of \(\nabla\) and \(\zcurr_0 \in (\Omega^{m-2}M)^\ast\) the ground state zero current of the Laplacian  \(\laplace\). Then, for each \(\eta\in \Omega^{m-2}M\),
	\[
		\lim_{t \to 0^+}\int_{\Gamma E}\langle\zcurr|\eta\rangle \,d\mu_t= \tfrac{1}{2\pi}\int_M JR^\nabla\!\!\wedge\eta \,, \quad \lim_{t\to \infty}\int_{\Gamma E} \langle\zcurr|\eta\rangle \,d\mu_t = \langle\zcurr_0|\eta\rangle \,.
	\]
\end{theorem}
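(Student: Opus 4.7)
The strategy is to re-express the expected zero current as the pull-back of a universal object---the Fubini-Study Kähler form on infinite-dimensional complex projective space---under a heat-kernel embedding of $M$, and then analyze the two limits by short-time heat-kernel asymptotics on the one hand and spectral decomposition on the other.

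Let $\{\phi_n\}_{n\geq 0}$ be an $L^2$-orthonormal eigenbasis of $\laplace$ with eigenvalues $0\leq\lambda_0<\lambda_1\leq\cdots$ (a generically simple ground state). Under $\mu_t$ the coefficients $c_n:=\langle\psi,\phi_n\rangle$ are independent centred complex Gaussians of variance $e^{-\lambda_n t}$. Since $\zcurr_\psi$ depends only on the class $[\psi]\in \CP(\Hil)$, it suffices to study the pushforward $\bar\mu_t$ on projective space. Introduce the heat-kernel coherent-state map
\[
  \Phi_t \colon M \to \CP(\Hil),\qquad p \longmapsto \bigl[\,{\textstyle\sum_n}\, e^{-\lambda_n t/2}\,\overline{\phi_n(p)}\,\phi_n\,\bigr],
\]
which is intrinsically the projective class of the heat section $K_{t/2}(\cdot,p)\in\Hil\otimes E_p^{*}$. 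A section $\psi$ vanishes at $p$ precisely when $\Phi_t(p)$ lies in the projective hyperplane orthogonal to $[\psi]$.

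A Kac--Rice argument of the flavour developed in \cite{nicolaescu2016,shiffman1999distribution}, adapted to the smooth line bundle and the heat-kernel Gaussian, then yields the current identity
\[
  \int_{\Gamma E} \zcurr_\psi\, d\mu_t(\psi) \;=\; \tfrac{1}{\pi}\,\Phi_t^{*}\omega_{FS},
\]
where $\omega_{FS}$ is the Fubini-Study form on $\CP(\Hil)$. Because $S_t$ is trace class, this identity is first verified for finite-dimensional spectral truncations and then transferred to the limit by dominated convergence. With this reduction in place, the two limits become purely geometric statements about $\Phi_t^{*}\omega_{FS}$. The large-$t$ limit is comparatively soft: the spectral gap $\lambda_1>\lambda_0$ gives $e^{\lambda_0 t/2}\Phi_t\to[\overline{\phi_0(\cdot)}\phi_0]$ uniformly on $\{\phi_0\neq 0\}$, so $\bar\mu_t$ concentrates in $\CP(\Hil)$ on the single line $[\phi_0]$. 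Combined with the uniform Gaussian bounds on $\|\psi\|$ and the fact that the zero current of any nonzero scalar multiple of $\phi_0$ equals $\zcurr_0$, dominated convergence delivers the second limit.

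The delicate step is the small-$t$ limit, which is where I expect the main work to lie. One invokes the classical short-time parametrix expansion of $K_t$: on the diagonal $K_t(p,p)\sim(4\pi t)^{-m/2}$, while the first two jets of $K_t(\cdot,\cdot)$ along $\{q=p\}$ encode the parallel transport of $\nabla$ together with the Riemannian metric of $M$. A second-order Taylor expansion of $\Phi_t$ in normal coordinates centred at $p$ shows that the Hermitian pull-back of the Fubini-Study metric is, to leading order in $t$, a positive multiple of the Riemannian metric on $M$, while its imaginary part reproduces the curvature 2-form $R^\nabla$. After cancellation of the singular prefactors one obtains $\Phi_t^{*}\omega_{FS}\to\tfrac{1}{2}JR^\nabla$ in the current topology, giving the first limit. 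The technical heart of the argument is to establish the required heat-kernel asymptotics with remainder estimates that are uniform in $p\in M$ and uniform up to one covariant derivative in each variable, so that the convergence is robust enough to survive pairing with an arbitrary smooth test form $\eta\in\Omega^{m-2}M$.
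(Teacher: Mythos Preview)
Your proposal follows essentially the same route as the paper: the expected zero current is rewritten as (a constant times) the pullback of the Fubini--Study form under the heat-kernel embedding, after which the $t\to 0^+$ limit is handled by the short-time parametrix and the $t\to\infty$ limit by spectral decomposition and rescaling by $e^{\lambda_0 t}$. The paper derives the key current identity not by quoting Kac--Rice but by an integral-geometric Crofton-type computation in $\CP^n$ (Theorem~\ref{thm:manifold-hyperplane-intersection-cpn}) which it then pushes to $\CP^\infty$ via finite-rank truncation and weak convergence of the Gaussian measures---precisely the ``verify on spectral truncations, pass to the limit'' step you sketch---so the underlying mechanism is the same.

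One small point to tighten: you assume a simple ground state ($\lambda_0<\lambda_1$), so that $\bar\mu_t$ collapses to the single line $[\phi_0]$. The paper does not impose this; its $t\to\infty$ argument shows that the rescaled measure converges to the standard Gaussian on the whole eigenspace $\ker(\laplace-\lambda_0)$, and $\zcurr_0$ is accordingly the \emph{average} zero current over that eigenspace. Your argument goes through verbatim once you replace ``concentrates on $[\phi_0]$'' by ``pushes forward to the standard Gaussian on $\mathrm{Eig}_0$,'' but as written it covers only the generic case.
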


The proof of Theorem \ref{mainthm:limits-of-zero-distributions} is given in Section \ref{sec:limits}. It is based on the heat kernel embeddings \(f_t\) (Section \ref{sec:Kodaira-corr}) of \(M\) into the infinite-dimensional complex projective space \(\CP^\infty=\P(\Hil)\) and follows from two observations both of which, we think, are of interest on their own:

First, an infinite-dimensional version of the following integral geometric theorem.

\begin{theorem}\label{mainthm:manifold-hyperplane-intersection-cpn}
	Let \(f\colon M\to \CP^n\) be an immersed oriented compact manifold with boundary. Let \(\omega_K\) denote the K{\"a}hler form of \(\CP^n\) and \(\eta\in\Omega^{m-2} M\). Then
	\[
		\pi\int_{\CP^n} \langle\icurr_f|\eta\rangle \, d\vol_{\CP^n} = \vol(\CP^n)\int_M f^\ast\omega_K\!\wedge \eta \,.
	\]
\end{theorem}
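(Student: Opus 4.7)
The plan is a double Fubini on the incidence variety
\[
Z := \{(x,p) \in M \times \CP^n : f(x) \perp p\},
\]
with projections $\pi_M : Z \to M$ and $\pi_\CP : Z \to \CP^n$. For almost every $p \in \CP^n$ the map $f$ is transverse to the projective hyperplane $H_p$ by Sard's theorem, and then $\pi_\CP^{-1}(p)$ is identified via $\pi_M$ with the codimension-$2$ submanifold $f^{-1}(H_p) \subset M$ over which $\icurr_f$ is integration; while for every $x \in M$ the fiber $\pi_M^{-1}(x)$ is the dual hyperplane $\{p : p \perp f(x)\} \cong \CP^{n-1}$.

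I would first apply Fubini along $\pi_\CP$ and then along $\pi_M$ to rewrite
\[
\int_{\CP^n}\langle\icurr_f|\eta\rangle\,d\vol_{\CP^n} \;=\; \int_Z \pi_M^*\eta \wedge \pi_\CP^*\,d\vol_{\CP^n} \;=\; \int_M \eta \wedge \Theta_f,
\]
where $\Theta_f := (\pi_M)_* \pi_\CP^*\, d\vol_{\CP^n}$ is a $2$-form on $M$. The next step is to identify $\Theta_f$. Note that $Z$ is the $f$-pullback of the universal incidence variety $\tilde Z = \{(q,p) \in \CP^n \times \CP^n : q\perp p\}$, so base change gives $\Theta_f = f^*\Theta$ with $\Theta := (\pr_1)_*\, \pr_2^*\, d\vol_{\CP^n}$, a $2$-form on $\CP^n$. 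The whole construction is $\U(n+1)$-equivariant, hence $\Theta$ is $\U(n+1)$-invariant; but the space of such $2$-forms on $\CP^n$ is one-dimensional, spanned by $\omega_K$, so $\Theta = c\,\omega_K$ for a single constant $c$.

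The main---and really the only nontrivial---step is pinning down $c$. I would do this by specializing the already-derived identity to $(M,f) = (\CP^n,\mathrm{id})$ with $\eta = \omega_K^{n-1}/(n-1)!$, where every ingredient can be computed directly: $\int_{H_p}\eta = \vol(\CP^{n-1})$ is independent of $p$, $\int_{\CP^n}\omega_K^n = n!\,\vol(\CP^n)$, and the normalization $\vol(\CP^k) = \pi^k/k!$ forces $c = \vol(\CP^n)/\pi$. Since $f^*\omega_K$ has even degree it commutes under wedge with $\eta$, so multiplying through by $\pi$ yields
\[
\pi\int_{\CP^n}\langle\icurr_f|\eta\rangle\,d\vol_{\CP^n} \;=\; \pi c\int_M f^*\omega_K \wedge \eta \;=\; \vol(\CP^n)\int_M f^*\omega_K \wedge \eta,
\]
which is the claim. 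The remaining technicalities---transversality for almost every $p$, integrability near $\partial M$, and checking that the naive fiber-wise pushforward coincides with the integral-geometric one---are standard and should follow from Sard's theorem plus dominated convergence.
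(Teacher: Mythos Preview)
Your argument is correct, and the overall architecture---incidence variety, double fibration, coarea/Fubini in both directions---matches the paper's exactly: your $Z$ is precisely the paper's $[f^\ast\taut^\perp]$, your $\Theta_f$ is the paper's fiber integral $\fint_{[f^\ast\taut^\perp]} \pi_2^\ast d\vol_{\CP^n}$, and the base-change identity $\Theta_f = f^\ast\Theta$ is implicit in the paper's setup as well.

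Where you diverge is in identifying this fiber integral. The paper computes it directly: it writes down the retrenchment of $\pi_2^\ast d\vol_{\CP^n}$ in a local frame, reduces it to a real $2$-form $\langle d\hat f,\psi_0\rangle_\RR \wedge \langle d\hat f, J\psi_0\rangle_\RR$ depending on a unit vector $\psi_0 \in f^\ast\taut^\perp$, and then averages over the sphere $\mathbb S^{2n-1}$ to extract $f^\ast\omega_K$ with the correct constant. You instead invoke $\U(n+1)$-equivariance to force $\Theta = c\,\omega_K$ and then determine $c$ by specializing to $f = \mathrm{id}_{\CP^n}$. Your route is shorter and more conceptual, and it sidesteps the coordinate computation entirely; the price is that it relies on the (admittedly well-known) fact that $\U(n+1)$-invariant $2$-forms on $\CP^n$ form a line, and it treats orientation bookkeeping as a black box. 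The paper's explicit computation, by contrast, tracks the orientation of the intersection current through the retrenchment formula, which matters downstream for the sign in Lemma~\ref{lma:zero-intersection-relation}. If you expand your sketch, you should verify once that the orientation convention on $\icurr_f$ agrees with the one induced by your fiber integration---your test case $f=\mathrm{id}$ confirms the magnitude of $c$ but, since $\vol(\CP^{n-1})>0$, also quietly fixes the sign.
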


Here \(\sigma_{f,[\psi^\perp]}\) denotes the intersection current of \(f(M)\) and the projective hyperplane \([\psi^\perp]\) as defined  in Section \ref{sec:Kodaira-corr}. The theorem is proven in Section \ref{sec:Imm-into-CPn} and can be considered as modification of results of Tasaki \cite{tasaki1995intgeom} and Kang et al \cite{tasaki2001intgeom}. Its infinite-dimensional counterpart will then be shown in Section \ref{sec:Imm-into-CPinfty}.

Second, as shown in Section \ref{sec:limits}, the pullback of the tautological line bundle \(\taut\) of \(\CP^\infty\) under \(f_t\) becomes---as Hermitian line bundle with connection---isomorphic to \(E\) when \(t\to 0_+\). More precisely, we give an explicit isomorphism \(F_t\colon E\to f_t^\ast\taut\) of Hermtian line bundles and show the following theorem using heat kernel parametrices.

\begin{theorem}\label{mainthm:t-to-zero-limit}
		If \(\nabla^t\) denotes the induced connection on \(\mathrm{Hom}(E,f_t^\ast\taut)\) then, for all \(\ell\geq 0\),
		\[
			\Vert F_t^{-1}\nabla^t F_t\Vert_{C^\ell} \to 0 \enspace \textit{for}\enspace t\to 0_+ \,.
		\]
\end{theorem}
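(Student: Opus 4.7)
The plan is to derive an explicit integral formula for $A_t := F_t^{-1}\nabla^t F_t$ via the heat kernel, then insert the small-time parametrix for $k_t$ and extract the asymptotics.

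First I would fix a local unit section $s$ of $E$. The heat semigroup identity $\int_M k_t(x,y)\,k_t(y,x)\,dy = k_{2t}(x,x)$ implies that the map $K_t(x)\colon E_x \to \Hil$, $e\mapsto k_t(\cdot,x)e$, scales norms by $\rho_t(x):=\sqrt{k_{2t}(x,x)}$, so $\tilde s := K_t s/\rho_t$ is a local unit section of $f_t^{\ast}\taut$ equal to $F_t s$. Since the tautological connection is the orthogonal projection of the trivial connection on $\Hil$ onto $\taut$, the connection 1-form of $\tilde s$ is $\tilde\alpha = \langle d_x\tilde s,\tilde s\rangle_\Hil$. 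Splitting $d_x(k_t(y,x)s(x)) = (\nabla^{(2)}_x k_t(y,x))\,s(x) + k_t(y,x)(\nabla s)(x)$ and applying the semigroup identity to the second term (which gives exactly $\rho_t^2\alpha$), together with the fact that $\tilde\alpha$ and $\alpha=\langle\nabla s,s\rangle$ are both purely imaginary, one obtains
\[
A_t \,=\, \tilde\alpha - \alpha \,=\, \frac{i\,\imaginary}{\rho_t^2}\int_M \bigl\langle (\nabla^{(2)}_x k_t(y,x))\,s(x),\, k_t(y,x)\,s(x)\bigr\rangle_{E_y}\,dy,
\]
where $\nabla^{(2)}_x$ denotes the covariant derivative in the $x$-slot of $k_t(y,x)\in\mathrm{Hom}(E_x,E_y)$.

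Next I insert the standard heat kernel parametrix $k_t(y,x) \sim \frac{e^{-d(y,x)^2/4t}}{(4\pi t)^{m/2}}\,j(y,x)^{-1/2}\bigl(\tau(y,x) + t\,u_1(y,x)+\cdots\bigr)$, valid in every $C^\ell$-norm on a tubular neighborhood of the diagonal, with $\tau$ parallel transport along the minimizing geodesic, $j$ the exponential-map Jacobian, and $u_k$ the standard heat coefficients; off-diagonal contributions are $O(t^\infty)$. Applying $\nabla^{(2)}_x$, the derivatives of $e^{-d^2/4t}$ and of $j^{-1/2}$ produce real-valued 1-forms that are killed by $\imaginary$, so the surviving imaginary contributions come from $\nabla^{(2)}_x\tau$ and its higher-order analogs. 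A radial-gauge computation at $x$ shows that $\nabla^{(2)}_x\tau(y,x)|_{y=x}=0$, so the leading surviving term vanishes to first order on the diagonal.

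Finally I change variables $u=\exp_x^{-1}(y)$ and rescale $v=u/\sqrt t$, transforming the integral into a Gaussian expectation on $T_xM$; the prefactors $\rho_t^{-2}(4\pi t)^{-m}\cdot(2\pi t)^{m/2}\cdot t^{m/2}$ combine to exactly $1$. The linear-in-$u$ piece of $\nabla^{(2)}_x\tau$ integrates to zero by oddness, and the remaining terms (quadratic in $u$, or coming from the explicit factor $t u_1$) each contribute $O(t)$, giving $A_t = O(t)$ pointwise. For $C^\ell$-estimates, each additional $x$-derivative either hits a smooth uniformly bounded coefficient or lowers a factor of $1/\sqrt t$ from the Gaussian exponent, which is reabsorbed by the same rescaling; combined with $O(t^\infty)$ control of the parametrix remainder in $C^\ell$, this yields $\|A_t\|_{C^\ell}\to 0$ for every $\ell\geq 0$. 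The main obstacle is the bookkeeping required to verify that \emph{all} real contributions cancel under $\imaginary$ and that every surviving imaginary piece vanishes on the diagonal to the right order to produce a factor of $t$ after Gaussian integration, uniformly under $\ell$-fold differentiation; the cleanest route is to work throughout in normal coordinates with a radial gauge centered at $x$ and match Taylor orders in $y-x$ against powers of $\sqrt t$ after rescaling.
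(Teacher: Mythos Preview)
Your approach is correct and runs parallel to the paper's: both fix a local unit frame, identify $F_t^{-1}\nabla^t F_t$ with the purely imaginary difference of connection $1$-forms, insert the short-time parametrix, and use that the parallel-transport coefficient $\tau=\Phi_0$ has vanishing covariant derivative on the diagonal to kill the would-be leading term.

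The one genuine difference is where the semigroup identity is invoked. You use $\int_M k_t(y,x)^\ast k_t(y,x)\,dy = k_{2t}(x,x)$ only for the normalization $\rho_t^2$ and for the piece containing $\nabla s$, then keep the remaining term as an honest integral $\int_M\langle(\nabla^{(2)}_x k_t)s,\,k_t s\rangle\,dy$ which you evaluate by Gaussian rescaling $v=u/\sqrt t$. The paper instead applies the semigroup identity to the \emph{entire} pairing, writing $\langle X(\delta^t\circ\xi),\,J\delta^t_{\xi_p}\rangle_\RR = X\langle\xi,\,JK_{2t}(\cdot,p)\xi_p\rangle_{E,\RR}$, so that no integration over $M$ is ever performed: everything reduces to reading off $K_{2t}(p,p)$ and $\nabla_X\bigl(K_{2t}(\cdot,p)\xi_p\bigr)\big|_p$ directly from the parametrix on the diagonal. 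This bypasses the Laplace-type bookkeeping you flag as ``the main obstacle'' (matching Taylor orders in $y-x$ against powers of $\sqrt t$, checking oddness cancellations, tracking $C^\ell$ under differentiation of the Gaussian), and the vanishing $\nabla_X(\Phi_0(\cdot,p)\xi_p)|_p=0$ together with $\langle\xi,J\xi\rangle_{E,\RR}=0$ immediately gives $\eta_t=O(t)$. Your route is more explicit about the provenance of each power of $t$; the paper's is shorter and sidesteps the rescaling argument entirely.
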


\section{Heat Kernel Embedding and Intersection Currents}\label{sec:Kodaira-corr}

Consider the trivial bundle \(\underline{\Gamma E}_M = M\times \Gamma E\). The evaluation map \(\mathrm{ev}\colon \underline{\Gamma E}_M \to E\), \((p,\psi)\mapsto \psi_p\), defines a natural surjective bundle map. Its dual then yields an identification of \(E^\ast\) with a line subbundle of the trivial bundle \(\underline{\Gamma E}_M^\ast\), which is the same thing as an injection \(f_0\colon M\to\P((\Gamma E)^\ast)\). This map is known as \emph{Kodaira correspondence} \cite{ferus2001}. 

For us it is convenient to identify \(E^\ast\) with \(E\) and work with the \emph{Dirac delta} \(\delta\colon E\to (\Gamma E)^\ast\) which assigns to a vector \(\xi \in E_p\) the Dirac distribution given by \(\langle\delta_\xi | \psi\rangle := \langle \xi,\psi_{p}\rangle_\subE\), where \(\langle.,.\rangle_\subE\) denotes the Hermitian metric of \(E\). Clearly, \(f_0(p) = \delta_{E_p}\). Moreover, one easily checks that \(f_0\) relates the zeros of a section \(\psi \in \Gamma E\) to intersections of \(f(M)\) and the projective hyperplane
\[
	[\psi^\perp] = \{[\phi] \in \P((\Gamma E)^\ast) \mid \langle\phi| \psi\rangle = 0\} \,.
\]
Now, smoothing the Dirac delta by the heat flow \(S_t\) defines a map \(\delta^t := S_t\delta\), which is known to take values in \(\Gamma E\subset \Hil\) \cite{berline2003heat}. Here \(\Hil^\ast\) and \(\Hil\) are identified via the Hermitian product \(\langle.,.\rangle_\subH\) of \(\Hil\). The map \(\delta^t\) thus descends to a map \(f_t\) mapping \(M\) into the infinite-dimensional complex projective space \(\CP^\infty:= \P(\Hil)\),
\[
	\begin{xy}
 	\xymatrix{
		E^\times\ar[r]^{\delta^t = S_t\delta} \ar[d]    &   \Hil\setminus \{0\} \ar[d]  \\
  		M \ar[r]_{f_t}  & \CP^\infty
  	}
	\end{xy}
\]
where \(E^\times\) denotes \(E\) with its zero section removed. As in \cite{gallot1994} one proves the following theorem which justifies to call \(f_t\) \emph{heat kernel embedding}.

\begin{theorem}
	For each \(t>0\) the map \(f_t\colon M \to \CP^\infty\) is a smooth embedding.
\end{theorem}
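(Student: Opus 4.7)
The plan is to verify smoothness, injectivity, and the immersion property separately; since \(M\) is compact and \(\CP^\infty\) is Hausdorff, these three imply that \(f_t\) is a smooth embedding. Smoothness follows from the smoothness of the heat kernel \(k_t\in\Gamma(E\boxtimes E^\ast)\) for each \(t>0\): unraveling \(\delta^t = S_t\delta\) gives the explicit formula \(\delta^t_\xi(q) = k_t(q,p)^\ast\xi\) for \(\xi\in E_p\), so \(\delta^t\colon E^\times\to\Hil\setminus\{0\}\) is smooth in \((p,\xi)\), and its \(\CC\)-linearity in the fiber guarantees that the induced map \(f_t\) is smooth.

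For injectivity, the decisive ingredient is the semigroup identity \(S_t = S_{t/2}\circ S_{t/2}\) together with the injectivity of \(S_{t/2}\) on \(\Hil\) (its spectrum \(\{e^{-\lambda_j t/2}\}\) is strictly positive). If \(f_t(p)=f_t(p')\), then \(\delta^t_\xi=\lambda\,\delta^t_{\xi'}\) for some nonzero \(\xi\in E_p\), \(\xi'\in E_{p'}\), and \(\lambda\in\CC^\times\). Rewriting \(\delta^t_\xi = S_{t/2}\delta^{t/2}_\xi\) and using the injectivity of \(S_{t/2}\) yields \(\delta^{t/2}_\xi=\lambda\,\delta^{t/2}_{\xi'}\); iterating halves the time and produces \(\delta^s_\xi=\lambda\,\delta^s_{\xi'}\) for each \(s=t/2^n\). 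From \(\langle\delta^s_\xi,\psi\rangle_\subH=\langle\xi,(S_s\psi)(p)\rangle_\subE\) and \(S_s\psi\to\psi\) in \(C^\infty\) as \(s\to 0_+\) one passes to the limit to conclude \(\langle\xi,\psi(p)\rangle_\subE=\lambda\,\langle\xi',\psi(p')\rangle_\subE\) for every \(\psi\in\Gamma E\); testing with \(\psi\) supported near \(p\) but not near \(p'\) forces \(p=p'\), after which the identity collapses to a rescaling.

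The immersion property is handled by the same semigroup-and-concentration strategy. If \(df_t|_p(v)=0\) for some \(v\in T_pM\), choose a \(\nabla\)-parallel local extension \(\eta\) of a nonzero \(\xi\in E_p\); the vanishing condition reads \(\partial_v\delta^t_\eta=c\,\delta^t_\xi\) for some \(c\in\CC\). Since the linear operator \(S_{t/2}\) commutes with the \(p\)-derivative \(\partial_v\), its injectivity propagates this identity unchanged to every \(s=t/2^n\), and letting \(s\to 0_+\) gives
\[
    \langle\xi,\nabla_v\psi\rangle_\subE = c\,\langle\xi,\psi(p)\rangle_\subE
\]
for every \(\psi\in\Gamma E\). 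Choosing \(\psi\) with \(\psi(p)=0\) and \(\nabla_v\psi=\xi\) yields \(|\xi|^2=0\), forcing \(v=0\). The step I expect to be the main technical hurdle is justifying the short-time limits in a topology strong enough to allow pairing with arbitrary smooth sections; this amounts to \(C^1\) control of \(S_s\) as \(s\to 0_+\), which is supplied by short-time heat kernel asymptotics, essentially the same estimates underlying Theorem \ref{mainthm:t-to-zero-limit}.
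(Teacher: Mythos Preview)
The paper does not actually prove this theorem; it simply writes ``As in \cite{gallot1994} one proves the following theorem'' and moves on. Your argument therefore supplies a proof where the paper has none, and it is correct: smoothness from the smooth heat kernel, injectivity and the immersion property from the injectivity of the heat operators together with \(S_s\to\mathrm{id}\) in \(C^1\), and the conclusion via compactness of \(M\) and Hausdorffness of \(\CP^\infty\). The closing device---testing the limiting identities against sections \(\psi\) with prescribed \(0\)- and \(1\)-jet at \(p\)---is exactly right; it is the line-bundle version of the fact that the Kodaira map \(f_0\) already separates points and tangent directions.

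Two minor comments. First, the dyadic iteration is unnecessary: since \(S_t=S_{t-s}\circ S_s\) for every \(0<s<t\) and \(S_{t-s}\) is injective on \(\Hil\), one step already yields \(\delta^s_\xi=\lambda\,\delta^s_{\xi'}\) (respectively \(\partial_v\delta^s_\eta=c\,\delta^s_\xi\)) for all such \(s\), and you can pass to the limit directly. Second, a \(\nabla\)-parallel local extension of \(\xi\) does not exist unless the curvature vanishes; what your computation actually uses---and what can always be arranged---is only \(\nabla\eta|_p=0\) at the single point \(p\). Finally, the \(C^1\) convergence \(S_s\psi\to\psi\) for smooth \(\psi\) that you flag as the main technical point is a standard property of the heat semigroup on a compact manifold and does not require the full parametrix machinery behind Theorem~\ref{mainthm:t-to-zero-limit}.
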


For \(\psi\in\Hil\) and \(\xi\in E\) we have that \(\langle S_t\delta_\xi, \psi\rangle_\subH = \langle \delta_\xi | S_t\psi\rangle\). Thus \(f_t\) relates the zeros set of the section \(S_t\psi\) to the intersection points of \(f_t(M)\) and the projective hyperplane \([\psi^\perp]\).

More generally, if \(f\colon M \to \CP^\infty\) is a smooth immersion and \(H \subset\CP^\infty\) is a projective hyperplane transverse to \(f\), then the preimage \(\Lambda = f^{-1}H\subset M\) is a compact submanifold of codimension \(2\). By transversality, for each \(p\in \Lambda\) there is a complex line \(L \subset \T_{f(p)}\CP^\infty\) such that \(\T_{f(p)}\CP^\infty = L\oplus \T_{f(p)} H\) and, if \(\pi_L\) denotes the projection onto \(L\), then \(\pi_L\circ d_pf\) restricts to an isomorphism between the normal space \(\N_p\Lambda\) and \(L\). Therefore the canonical orientation of \(L\) carries over to \(N\Lambda\) and consequently determines a canonical orientation for \(\Lambda\). The {\it intersection current} \(\icurr_{f,H}\) is then defined by integration over \(\Lambda\) with its canonical orientation.

Sometimes it is convenient to work with the Fubini--Study metric, which is the unique Riemannian metric \(\langle.,.\rangle_\RR\) on \(\CP^\infty\) that turns the canonical projection \(\pi\) from the infinite unit sphere \(\mathbb S^\infty\) onto \(\mathbb S^\infty/\mathbb S^1 \cong \CP^\infty\) into a Riemannian submersion (cf. \cite{gallot2004riemannian}). This turns \(\CP^\infty\) into a K{\"a}hler manifold with Hermitian metric \(\langle.,.\rangle = \langle.,.\rangle_\RR + i\langle J.,.\rangle_\RR\). In this situation the orientation of an intersection current \(\icurr_{f,H}\) is such that
\[
	{\det}_\RR (\pr_{\N H}\circ d f_t|_{\N\Lambda}) > 0\,,
\]
where \(\pr_{\N H}\) denotes the orthogonal projection to \(\N H\) and \(\det_\RR\) the real determinant. 

\begin{lemma}\label{lma:zero-intersection-relation}
	Let \(f_t\) be the heat kernel embedding for time \(t>0\) and \([\psi^\perp]\subset \CP^\infty\) a projective hyperplane transverse to \(f_t\), then 
	\[
		\icurr_{f_t,[\psi^\perp]} = -\zcurr_{\,S_t\psi} \,.
	\]
\end{lemma}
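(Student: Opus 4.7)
The plan is to compare the defining equations for \(\Lambda := f_t^{-1}([\psi^\perp])\) and for the zero set of \(S_t\psi\) in a common local trivialization, then chase through the orientation conventions. Fix \(p\in\Lambda\) and a smooth local section \(\xi\colon U\to E^\times\). Then \(\delta^t\circ\xi\colon U\to\Hil\setminus\{0\}\) is a local lift of \(f_t\), and the natural \(\CC\)-valued function cutting out the hyperplane along this lift is
\[
  g(p') := \langle\delta^t_{\xi(p')},\psi\rangle_\subH \,.
\]
Self-adjointness of \(S_t\) together with the defining property of \(\delta\) give
\[
  g(p') = \langle\delta_{\xi(p')},S_t\psi\rangle_\subH = \langle\xi(p'),(S_t\psi)_{p'}\rangle_\subE \,,
\]
so \(\Lambda\) coincides with the zero set of \(S_t\psi\) on \(U\), and transversality of \([\psi^\perp]\) to \(f_t\) at \(p\) is equivalent to simplicity of the zero of \(S_t\psi\) at \(p\).

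The second step is to compare the two orientations of \(\N_p\Lambda\). Write \(S_t\psi=h\,\xi\) locally; since \(h(p)=0\),
\[
  d_pg(X) = \langle\xi(p),\nabla_X(S_t\psi)\rangle_\subE = \|\xi(p)\|^2\,d_ph(X) \,.
\]
By definition, \(\zcurr_{S_t\psi}\) orients \(\N_p\Lambda\) so that \(d_ph|_{\N_p\Lambda}\colon \N_p\Lambda\to\CC\) is orientation preserving. For \(\icurr_{f_t,[\psi^\perp]}\), set \(\phi := \delta^t_{\xi(p)}\) and identify the normal line \(L=\CC\psi\subset\phi^\perp\) with \(\CC\) via \(\alpha\psi\mapsto\alpha\). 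Using \(\langle\phi,\psi\rangle_\subH=0\),
\[
  \pr_L(d_pf_t(X)) = \frac{\langle\psi,d_p(\delta^t\circ\xi)(X)\rangle_\subH}{\|\psi\|^2}\,\psi \,,
\]
and Hermitian symmetry gives \(\langle\psi,d_p(\delta^t\circ\xi)(X)\rangle_\subH = \overline{d_pg(X)} = \|\xi(p)\|^2\,\overline{d_ph(X)}\).

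Hence, under \(L\cong\CC\), the map \(\pr_L\circ d_pf_t|_{\N_p\Lambda}\) equals the positive real multiple \(\|\xi(p)\|^2/\|\psi\|^2\) of \(\overline{d_ph|_{\N_p\Lambda}}\). Since complex conjugation reverses the orientation of \(\CC\), the two orientations of \(\N_p\Lambda\) are opposite; via the fixed orientation of \(M\) the induced orientations on \(\Lambda\) then differ by a sign, yielding \(\icurr_{f_t,[\psi^\perp]} = -\zcurr_{S_t\psi}\). The only real subtlety, and the actual source of the minus sign, is precisely this conjugation: \(g\) pairs the lift of \(f_t\) with \(\psi\) in one slot of the Hermitian inner product, while the orthogonal projection onto \(L=\CC\psi\) pairs them in the other.
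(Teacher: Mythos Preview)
Your proof is correct and follows essentially the same route as the paper's: both express the normal projection \(\pr_L\circ d_pf_t\) in terms of the local defining function \(h\) (the paper's \(\varphi\)) of \(S_t\psi\) and trace the minus sign to the complex conjugation \(\overline{d_ph}\) versus \(d_ph\). The only notable difference is that the paper first normalizes the local frame so that \(\delta^t_\xi\) lands on the unit sphere with horizontal derivative at \(p\), whereas you work directly with an arbitrary lift and observe that the vertical \(\CC\phi\)-component is irrelevant since \(\psi\perp\phi\); this is a minor streamlining of the same computation.
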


\begin{proof}
	As pointed out above, \(f_t^{-1}[\psi^\perp]\) coincides with the zero set \(Z\) of \(S_t\psi\). 	Given a local frame \(\xi\in \Gamma E\) at \(p\in Z\), there is \(\CC\)-valued function \(\varphi\) such that \(S_t\psi = \varphi\,\xi\) and we are done if we can show that, for any orientation on \(Z\),
	\[
		\mathrm{sign}\,{\det}_\RR (\pr_{\N[\psi^\perp]}\circ d f_t|_{\N_p\Lambda}) = - \mathrm{sign}\,{\det}_\RR(d\varphi|_{\N_p\Lambda})\,.
	\]
	The local frame \(\xi\) can be chosen such that \(\hat f_t = \delta^t_\xi\) lifts \(f_t\) to \(\mathbb S^\infty\subset \Hil\) with \(d_p \hat f_t\) taking values in the horizontal space \(\hat f_t(p)^\perp\):
	\[
		\hat f_t \colon M \to \mathbb S^\infty ,\quad f_t = \pi\circ \hat f_t\,,\quad \mathrm{img}\,d_p \hat f_t \subset \hat f_t(p)^\perp \,,
	\]
	Since \(\langle \psi,\hat f_t(p) \rangle_\subH=0\), \(\psi\in \T_{\hat f_t(p)}\mathbb S^\infty\) is horizontal and \(d_{\hat f_t(p)}\pi(\psi) \in \N_{f_t(p)}[\psi^\perp]\). Using that \(\pi\) is a Riemannian submersion, we get
	\begin{gather*}
		\pr_{\N_{f_t(p)}[\psi^\perp]}\circ d_p f_t = \langle d_{\hat f_t(p)}\pi(\psi), d_p f_t \rangle = \langle \psi, d_p \hat f_t \rangle_\subH \\
		= d_p\langle\psi,\hat f_t\rangle_\subH = d_p\langle \psi,\delta^t_\xi\rangle_\subH = d_p\langle S_t\psi ,\xi \rangle_\subE = d_p(|\xi|^2\bar\varphi)\,.
	\end{gather*}
	Thus \(\mathrm{sign}\,{\det}_\RR (\pr_{\N[\psi^\perp]}\circ d f_t|_{\N_p\Lambda}) = \mathrm{sign}\,{\det}_\RR (d(|\xi|^2\bar\varphi)|_{\N_p\Lambda}) = -\mathrm{sign}\,{\det}_\RR (d\varphi|_{\N_p\Lambda})\).
\end{proof}

\section{Immersions into \(\CP^n\)}\label{sec:Imm-into-CPn}

In finite dimensions the (normalized) Riemannian volume form of \(\CP^n\) provides a canonical rotationally invariant probability measure on the space of projective hyperplanes. The following theorem tells us that the intersection current of an immersed compact oriented manifold \(f\colon M\to \CP^n\) and a random hyperplane is (up to a factor of \(\pi\)) given by the pullback of the K\"ahler form \(\omega_K\) of \(\CP^n\),
\[
	\omega_K = \imaginary\langle.,.\rangle \,.
\]
It can be regarded as a modification of a result of Kang and Tasaki \cite{tasaki2001intgeom} on real surfaces in complex projective space.

\begin{theorem}\label{thm:manifold-hyperplane-intersection-cpn}
	Let \(f\colon M\to \CP^n\) be a compact immersed oriented manifold with boundary and \(\eta\in \Omega^{m-2} M\), \(m=\dim M\). Then
	\[
		\pi\int_{\CP^n}\langle\icurr_f |\eta\rangle\, d\vol_{\CP^n} = \vol(\CP^n)\int_M f^\ast\omega_K\!\wedge \eta \,.
	\]
\end{theorem}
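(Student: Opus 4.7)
The plan is to realize both sides of the claimed identity as fiber integrals of a single top-degree form on an incidence variety, and then to identify the resulting 2-form on $M$ with a multiple of $f^{\ast}\omega_K$. To this end I first introduce the incidence variety
\[
	I := \{(p, [\psi]) \in M \times \CP^n : f(p) \in [\psi^\perp]\},
\]
after identifying projective hyperplanes in $\CP^n$ with $\CP^n$ itself via Hermitian duality. Then $I$ is a smooth submanifold of dimension $m+2n-2$: the projection $\pi_M \colon I \to M$ is a fiber bundle with fiber the $\CP^{n-1}$ of hyperplanes through $f(p)$, and $\pi_P \colon I \to \CP^n$ has fiber $f^{-1}([\psi^\perp])$ at each $[\psi]$ transverse to $f$ (a conull set, by Sard).

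The key object is the top-degree form $\Omega := \pi_M^{\ast}\eta \wedge \pi_P^{\ast}\, d\vol_{\CP^n}$ on $I$, which I would integrate two ways. Fiber integration along $\pi_P$, combined with the orientation convention used to define $\icurr_{f,H}$ in Section~\ref{sec:Kodaira-corr}, identifies
\[
	\int_I \Omega = \int_{\CP^n}\langle \icurr_{f,[\psi^\perp]}|\eta\rangle\, d\vol_{\CP^n}([\psi]),
\]
while fiber integration along $\pi_M$ yields $\int_I \Omega = \int_M \eta \wedge \alpha$, with $\alpha := (\pi_M)_{\ast}\pi_P^{\ast}\, d\vol_{\CP^n} \in \Omega^2(M)$. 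It thus remains to show $\alpha = \tfrac{\vol(\CP^n)}{\pi}\, f^{\ast}\omega_K$.

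This identification is a local computation. Fix $p \in M$ and a unitary frame $(e_0, \ldots, e_n)$ of $\CC^{n+1}$ with $e_0$ a Hopf lift of $f(p)$; the fiber $\pi_M^{-1}(p) = \CP(e_0^\perp)$ is then the totally geodesic $\CP^{n-1}$, and the tangency condition defining $T_{(p,[\psi])}I$ admits explicit horizontal lifts of vectors $X \in T_pM$ whose $\pi_P$-images span the 2-plane $\CC e_0 \subset T_{[\psi]}\CP^n$ normal to the fiber. Evaluating the ambient volume form on such a pair of lifts reduces $\alpha_p(X,Y)$ to an integral of $\imaginary[\langle d_pf(X), \psi\rangle\langle\psi, d_pf(Y)\rangle]$ over $\CP^{n-1}$; applying the standard second moment $\int_{\mathbb S^{2n-1}}\psi_i \bar\psi_j\,d\vol = \tfrac{\vol(\mathbb S^{2n-1})}{n}\delta_{ij}$, the Hopf identity $\vol(\mathbb S^{2n-1}) = 2\pi\vol(\CP^{n-1})$, and $\vol(\CP^n) = \pi^n/n!$ then delivers the claimed constant multiple of $\omega_K(d_pf(X), d_pf(Y)) = (f^{\ast}\omega_K)_p(X,Y)$.

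The main obstacle I anticipate is the sign and orientation bookkeeping: matching the orientation induced on the fibers of $\pi_P$ with the canonical orientation of $f^{-1}([\psi^\perp])$ used to define $\icurr_{f,H}$, and aligning the decomposition $T_{[\psi]}\CP^n = \CC e_0 \oplus T_{[\psi]}\CP^{n-1}$ with the complex orientation of $\CP^n$; a single mis-oriented basis would flip the sign of the final identity. The remaining calculations, though delicate, are all explicit linear algebra.
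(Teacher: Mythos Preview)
Your proposal is correct and follows essentially the same approach as the paper: your incidence variety $I$ is precisely the paper's $[f^\ast\taut^\perp]$, the two fiber integrations are the paper's coarea formula along $\pi_2$ and Greub-style fiber integral along $\pi_1$, and your second-moment average is the paper's rotation trick $\int_{\mathbb S^{2n-1}}\pr_{\RR(\cdot)} = \tfrac{\vol(\mathbb S^{2n-1})}{2n}\,\mathrm{id}$ in slightly different dress. The orientation bookkeeping you flag is indeed the delicate point, which the paper handles via the retrenchment formalism of \cite{greub1972connections}.
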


\begin{proof}
	Let \(\taut\subset \CP^n\times \CC^{n+1}\) denote the tautological bundle and \(\taut^\perp\) its orthogonal bundle,
	\[
		\CP^n\times \CC^{n+1} = \taut \oplus \taut^\perp \,.
	\]
	The space of projective hyperplanes intersecting \(f(M)\) can be parametrized by the projectivized pullback bundle \([f^\ast\taut^\perp] \subset M\times \CP^n\) via projection onto the second component
	\[
		\pi_2 \colon [f^\ast\taut^\perp] \to \CP^n,\quad (p,[\psi]) \mapsto [\psi] \,.
	\]
	Let \(\pi_1\colon [f^\ast\taut^\perp] \to M\) denote the projection to the first component. Since the differentials \(d\pi_i\) have constant rank everywhere, the corresponding kernel bundles are smooth subbundles and provide a splitting of \(T [f^\ast\taut^\perp]\):
	\[
		T[f^\ast\taut^\perp] = V\oplus H,\quad V = \ker d\pi_1,\quad H = \ker d\pi_2 \,.
	\]
	In particular, \(H \cong \pi_1^\ast \T M\) and \(\T [f^\ast\taut^\perp]_p \cong V|_{[f^\ast\taut^\perp]_p}\).
	Since \([f^\ast\taut^\perp]\) is a \(\CP^{n-1}\)-fiber bundle each fiber caries a canonical orientation which, by the splitting, extends to a bundle orientation \(\alpha\in\Omega^{2n-2}[f^\ast\taut^\perp]\). Furthermore, if \(d\vol_M\) denotes a volume form on \(M\), then the total space \([f^\ast\taut^\perp]\) is oriented by the local product orientation \cite{greub1972connections},
	\[
		d\vol_{[f^\ast\taut^\perp]} = \pi_1^\ast d\vol_M\wedge \alpha \,.
	\]
	Since the form \(\pi_2^\ast d\vol_{\CP^n}\) is of even degree, the coarea formula (Theorem \ref{thm:generalized-degree-formula}) yields
	\[
		\int_{\CP^n} \langle\pcurr_{\pi_2}| \pi_1^\ast\eta\rangle\, d\vol_{\CP^n} = \int_{[f^\ast\taut^\perp]}\pi_1^\ast\eta\wedge \pi_2^\ast d\vol_{\CP^n} = \int_M\fint_{[f^\ast\taut^\perp]}\pi_1^\ast\eta\wedge \pi_2^\ast d\vol_{\CP^n} \,.
	\]
	Here \(\pcurr_{\pi_2}\) is the preimage current defined in Appendix \ref{app:degthm} and \(\fint_{[f^\ast\taut^\perp]}\) denotes the integral over the fiber \cite{greub1972connections}.
	
	From the definition of \(\pcurr_{\pi_2}\), \(\icurr_f\) it's not hard to check that \(\langle\pcurr_{\pi_2}| \pi_1^\ast\eta\rangle = \langle\icurr_{f}| \eta\rangle\). By Proposition IX in Section 7.13 of \cite{greub1972connections} we get
	\[
		\fint_{[f^\ast\taut^\perp]}\pi_1^\ast\eta\wedge \pi_2^\ast d\vol_{\CP^n} = \eta\wedge \fint_{[f^\ast\taut^\perp]}\pi_2^\ast d\vol_{\CP^n} \,.
	\]
	Thus we are done if we can show that
	\[
		\fint_{[f^\ast\taut^\perp]} \pi_2^\ast d\vol_{\CP^n} = \tfrac{\vol(\CP^n)}{\pi}f^\ast\omega_K \,.
	\]
	To perform the integration over the fiber we compute the \emph{retrenchment} \cite{greub1972connections} of \(\pi_2^\ast d\vol_{\CP^n}\), i.e. a section \(\Omega \in \Gamma\,\mathrm{Hom}(\Lambda^{2n-2} V, \pi_1^\ast\Lambda^2\T M^\ast)\) such that
	\[
		\Omega(Z_1,\ldots,Z_{2n-2})(d\pi_1(X),d\pi_1(Y)) = \pi_2^\ast d\vol_{\CP^n}(X,Y, Z_1,\ldots,Z_{2n-2})\,,
	\]
	where \(X,Y\in\Gamma T[f^\ast\taut^\perp]\) and \(Z_i \in \Gamma V\).
	
	Therefore, let \((p_0,[\psi_0])\in [f^\ast\taut^\perp]\). Assume that \(|\psi_0| =1\) and let \(\phi_0\in \mathbb S^{2n+1}\) such that \(f(p_0) = [\phi_0]\). Then there is a neighborhood \(U\ni p_0\) and a local lift \(\hat f \colon U\to \phi_0+\phi_0^\perp\subset \CC^{n+1}\) of \(f\). Consider the parametrization
	\[
		\varphi\colon U\times \phi_0^\perp \to [f^\ast\taut^\perp], \quad (p,\psi) \mapsto (p,[\widehat\varphi(p,\psi)]) \,,
	\]
	where
	\[
		\widehat\varphi (p,\psi) := |\hat f(p)|^2\psi - \langle\widehat f(p), \psi\rangle \widehat f(p) \,.
	\]
	Let \(\dot p\in \T_{p_0} M\), \(\dot\psi \in \T_{\psi_0} \phi_0^\perp\cong \phi_0^\perp\). Then \(d\widehat f(\dot p) \in \phi_0^\perp\) and
	\begin{align*}
		d\widehat\varphi (\dot p, \dot\psi) & = 2\real(\underbrace{\langle d\widehat f(\dot p),\phi_0\rangle}_{=0})\psi_0 + \dot\psi - \bigl(\langle d\widehat f(\dot p), \psi_0\rangle + \underbrace{\langle\phi_0, \dot\psi\rangle}_{=0}\bigr)\phi_0 - \underbrace{\langle\phi_0,\psi_0\rangle}_{=0} d\widehat f(\dot p)\\
& = \dot\psi - \langle d\widehat f(\dot p), \psi_0\rangle \phi_0 \,. 
	\end{align*}
	Thus \(d_{(p_0,\psi_0)}\widehat\varphi\colon \mathrm T_{p_0} M \oplus \phi_0^\perp \to \CC\phi_0 \oplus \phi_0^\perp\) has the following form
	\[
		d_{(p_0,\psi_0)}\widehat\varphi= \left(\begin{matrix}-\langle d_{p_0}\widehat f, \psi_0\rangle & 0 \\ 0 & \mathrm{id}_{\phi_0^\perp}  \end{matrix}\right) \,.
	\]
	Now, let \(X,Y\in \T_{p_0} M\) and let \({\dot\psi}_1,\ldots,{\dot\psi}_{2n-2} \in \phi_0^\perp \cap \psi_0^\perp\) be a positively oriented orthonormal basis. Set \(\widehat X = d\varphi(X)\), \(\widehat Y = d\varphi(Y)\) and \(Z_i = d\varphi(\dot\psi_i)\) for \(i=1,\ldots,m\). Then \(d\pi_1(\widehat X) = X\), \(d\pi_1(\widehat Y) = Y\) and \(Z_i \in V_{(p_0,[\psi_0])}\).
	Note, by the Fubini--Study construction the canonical projections \(\pi_{\CP^n} \colon \CC^{n+1}\setminus \{0\} \to \CP^n\) and \(\pi_{\CP^{n-1}}\colon \phi_0^\perp\setminus \{0\} \to [\phi_0^\perp]\) provide complex linear isometries
	\[
		d_{\phi_0} \pi_{\CP^n}\colon \phi_0^\perp \to T_{f(p)}\CP^n,\quad d_{\psi_0} \pi_{\CP^{n-1}}\colon \phi_0^\perp \cap \psi_0^\perp \to T_{[\psi_0]}[\phi_0^\perp] \,.
	\]
	Thus \(\alpha(Y_{m+1},\ldots,Y_{m+2n-2})=1\) and we get
	\begin{align*}
		\Omega(Z_1,\ldots,Z_m) & (X,Y)=  \pi_2^\ast d\vol_{\CP^n} (d\varphi(X),d\varphi(Y),Z_{m+1},\ldots,Z_{m+2n-2}) \\
			& = (\pi_{\CP^n} \circ\widehat\varphi)^\ast d\vol_{\CP^n} (X,Y,{\dot\psi}_1,\ldots,{\dot\psi}_{2n-2}) \\
			& = \widehat\varphi^\ast d\vol_{\mathbb S^{2n+1}}(J\psi_0,X,Y,{\dot\psi}_1,\ldots,{\dot\psi}_{2n-2}) \\
			& = \widehat\varphi^\ast d\vol_{\CC^{n+1}}(\psi_0,J\psi_0,X,Y,{\dot\psi}_1,\ldots,{\dot\psi}_{2n-2}) \\
			& = \langle d_{p_0}\widehat f(X),\psi_0\rangle_\RR\langle d_{p_0}\widehat f(Y),J\psi_0\rangle_\RR - \langle d_{p_0}\widehat f(Y),\psi_0\rangle_\RR\langle d_{p_0}\widehat f(X),J\psi_0\rangle_\RR \\
			& = \langle d_{p_0}\widehat f,\psi_0\rangle_\RR\wedge\langle d_{p_0}\widehat f,J\psi_0\rangle_\RR (X,Y)\, \alpha(Z_{m+1},\ldots,Z_{m+2n-2}) \,.
	\end{align*}
	Using that \(d_{\phi_0} \pi_{\CP^n} \colon \phi_0^\perp \to T_{f(p_0)}\CP^n\) sends \(\psi_0\) to a unit normal vector \(\nu \in \N_{f(p_0)}[\psi^\perp]\), we get
	\[
		 \langle d_{p_0}\widehat f,\psi_0\rangle_\RR\wedge\langle d_{p_0}\widehat f,J\psi_0\rangle_\RR = \langle d_{p_0}f,\nu\rangle_\RR\wedge\langle d_{p_0} f,J\nu\rangle_\RR = \langle Jd_{p_0}f\wedge \langle d_{p_0} f,\nu\rangle_\RR\nu\rangle_\RR \,.
	\]
	Moreover, \(\langle Jd_{p_0}f\wedge\langle d_{p_0} f,\nu\rangle_\RR \nu\rangle_\RR/|\nu|^2\) is independent of the choice of \(\nu\) and thus only depends on \((p_0,[\psi_0])\in [f^\ast\taut^\perp]\). This defines a smooth bundle map \(A\colon [f^\ast \taut^\perp] \to \Lambda^2 \T^\ast M\). With the notation in Section 7.13 of \cite{greub1972connections},
	\[
		\Omega = A\alpha \in \Gamma\,\mathrm{Hom}(\Lambda^{2n-2}V, \pi_1^\ast\Lambda^2\T^\ast M) \,.
	\]
	Note that the unit vector bundle \(\widehat\pi\colon \U([f^\ast\taut^\perp]) \to [f^\ast\taut^\perp]\), consisting of all the representatives of length one, is an orientable \(\mathbb S^1\)-bundle. Let \(d\theta\in\Omega^1\U(f^\ast\taut^\perp)\) denote the extended Riemannian fiber volume. Then \(\widehat\pi^\ast\Omega\wedge d\theta\) orients \(\U(f^\ast \taut^\perp)\to M\) and
	\[
		\pi\fint_{[f^\ast\taut^\perp]} \Omega = \fint_{\U[f^\ast \taut^\perp]} \widehat\pi^\ast(A\alpha)\wedge d\theta = \bigl\langle Jd\widehat f\wedge \fint_{\U[f^\ast \taut^\perp]} \mathrm{pr}_{\RR(.)}\circ d\widehat f\, \bigr\rangle_\RR \,.
	\]
	Here \(\mathrm{pr}_{\RR \psi}\) denotes the orthogonal projection to real span of \(\psi\in f^\ast\taut^\perp\).
	
	Given \(\psi\in \U[f^\ast\taut^\perp]_{p_0}\) then there exist rotations \(\rho_1,\ldots,\rho_{2n}\) such that  \(\rho_1\psi,\ldots,\rho_{2n}\psi\) form an orthonormal basis
	\begin{gather*}
		\int_{\mathbb S^{2n-1}} \pr_{\RR(.)}\circ d_{p_0}\widehat{f} = \tfrac{1}{2n}\sum_i \int_{\mathbb S^{2n-1}} \pr_{\RR \rho_i(.)}\circ d_{p_0} \widehat f	= \tfrac{1}{2n}\int_{\mathbb S^{2n-1}} d_{p_0}\widehat f = \tfrac{\vol(\mathbb S^{2n-1})}{2n}d_{p_0}\widehat f \,.
	\end{gather*}
	With \(\langle J d_{p_0}\widehat f\wedge d_{p_0}\widehat f\rangle_\RR = \langle Jd_{p_0}f\wedge d_{p_0}f\rangle_\RR = 2f^\ast\omega_k|_{p_0}\) and \(\vol(\CP^n) = \frac{\pi^n}{n!}=\frac{\pi}{n}\vol(\CP^{n-1})\) follows the claim.
\end{proof}

\section{Immersions into \(\CP^\infty\)}\label{sec:Imm-into-CPinfty}

Let \(S\colon \Hil \to \Hil\) be a selfadjoint positive-definite trace-class operator. To each such operator we can assign a centered Gaussian probability measure \(\mu\) on \(\Hil\) uniquely characterized by its Fourier transform \cite{parthasarathy1972probability}
\[
	\widehat\mu(\psi) = \int_\Hil e^{i\langle\phi,.\rangle_\RR}d\mu = e^{-\frac{1}{2}|S\psi|^2} \,.
\]
Suppose \(S\) has finite-dimensional eigenspaces and possesses a complete orthonormal system \(\{\psi_i\in\Hil\}_{i\in\NN}\) of eigenvectors:
\[
	S\psi_i = \lambda_i\psi_i, \quad \lambda_1\geq \lambda_2\geq \lambda_3\geq \ldots > 0 \,.
\]
Then \(\Hil_k = \mathrm{span}\{\psi_i\mid i\leq k\}\) forms a nested sequence of finite-dimensional subspaces and with \(\CP^k := \P(\Hil_k)\) we obtain a commuting diagram
\[
	\begin{xy}
 	\xymatrix{
		\Hil_1\setminus\{0\} \ar[d] \ar[r] & \Hil_2\setminus\{0\} \ar[d] \ar[r]  & \Hil_3\setminus\{0\} \ar[d]  \ar[r] & \cdots   \Hil\setminus\{0\} \ar[d]  \\
  		\CP^0 \ar[r] & \CP^1 \ar[r] & \CP^2 \ar[r] & \cdots \CP^\infty \\
  	}
	\end{xy}
\]
Let \(\widetilde S_k\colon \Hil_k\to \Hil_k\) denote the restriction of \(S\) to \(\Hil_k\) and let \(S_k := \widetilde S_k \circ \pr_k\), where \(\pr_k\colon \Hil \to \Hil_k\) denotes the orthogonal projection onto \(\Hil_k\). Then, for each \(k\), there is a Gaussian measures \(\widetilde\mu_k\) on \(\Hil_k\) corresponding to \(\widetilde S_k\) and a Gaussian measures \(\mu_k\) on \(\Hil\) corresponding to \(S_k\). For \(k\to\infty\), we have strong convergence \(S_k \to S\). From Lemma 5.1 and Theorem 4.5 of Chapter VI in \cite{parthasarathy1972probability} follows weak convergence of the measures:
\[
	\mu_k \to \mu \textit{ for } k\to \infty \,.
\]

Recall : Given two topological spaces \(X\) and \(Y\) and a continuous map \(f\colon X \to Y\). Then, for any bounded (or bounded
from below) Borel measure \(\nu\) on X, the pushforward
\[
	f_\ast\nu = \nu \circ f^{-1}
\]
defines a Borel measure on \(Y\) and we have the following transformation formula for pushforward measures (cf. \cite{bogachev2006measure}).

\begin{theorem}\label{thm:transformation-formula-for-pushforward}
Let \(\nu\) be non-negative. A Borel function \(g\) on \(Y\) is integrable with respect to \(f_\ast\nu\) precisely
when the function \(g\circ f\) is integrable with respect to \(\nu\). In addition,
\[
	\int_Y g\, d(f_\ast\nu) = \int_X g\circ f\, d\nu \,.
\]
\end{theorem}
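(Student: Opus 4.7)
The plan is to establish the formula by the standard measure-theoretic bootstrap, starting from indicator functions and extending successively to simple, non-negative, and finally integrable Borel functions. For any Borel set $B\subseteq Y$, continuity of $f$ makes $f^{-1}(B)\subseteq X$ Borel, and $\mathbf{1}_{f^{-1}(B)}=\mathbf{1}_B\circ f$, so by the very definition $(f_\ast\nu)(B)=\nu(f^{-1}(B))$ one gets
\[
    \int_Y \mathbf{1}_B\, d(f_\ast\nu) = \int_X \mathbf{1}_B\circ f\, d\nu \,.
\]
Linearity of the integral immediately promotes this to all non-negative simple Borel functions on $Y$.

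Next, for a general Borel function $g\colon Y\to[0,\infty]$, pick an increasing sequence $g_n\uparrow g$ of non-negative simple Borel functions; then $g_n\circ f\uparrow g\circ f$ is an increasing sequence of non-negative simple Borel functions on $X$. The identity, already known for each $g_n$, passes to the limit by applying the monotone convergence theorem independently on each side, yielding the formula for arbitrary non-negative Borel $g$. The integrability equivalence is then immediate: applied to $|g|$, the formula gives $\int_Y |g|\,d(f_\ast\nu)=\int_X|g\circ f|\,d\nu$, because $|g|\circ f=|g\circ f|$, so one side is finite precisely when the other is.

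Finally, for a general Borel function $g$ that is $f_\ast\nu$-integrable, decompose $g=g^+-g^-$ with $g^\pm\geq 0$ Borel; since $(g\circ f)^\pm=g^\pm\circ f$ and both $g^\pm$ are $f_\ast\nu$-integrable by the previous paragraph, so are $g^\pm\circ f$ with respect to $\nu$. Applying the non-negative formula to $g^+$ and $g^-$ separately and subtracting yields the identity for $g$. No serious obstacle arises: the whole argument is the \emph{standard machine} of linearity followed by monotone convergence, and the only point requiring continuity of $f$ (or, more generally, Borel measurability) is the statement that $g\circ f$ is Borel whenever $g$ is, which guarantees that both integrals make sense.
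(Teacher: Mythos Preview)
Your argument is the standard ``simple functions, monotone convergence, positive/negative parts'' bootstrap and is correct as written. Note, however, that the paper does not actually supply its own proof of this theorem: it is stated as a recalled fact with a citation to Bogachev's measure theory text, so there is nothing to compare against beyond observing that your proof is exactly the classical one found in such references.
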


The following lemma will be the key for the transition of Theorem \ref{thm:manifold-hyperplane-intersection-cpn} to infinite dimensions.
\begin{lemma}\label{lma:transition-to-finite-case}
	Let \(\iota_k\colon \Hil_k \to \Hil\) denote the inclusion. Then \(\mu_k = {\iota_k}_\ast \widetilde\mu_k\).
\end{lemma}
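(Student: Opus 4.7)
The plan is to show the two centered measures agree by identifying their Fourier transforms, relying on the uniqueness stated at the start of this section: a centered Gaussian on \(\Hil\) is determined by the operator \(S\) that appears in \(\widehat\mu(\psi)=e^{-\frac{1}{2}|S\psi|^2}\). Both \(\mu_k\) and \({\iota_k}_\ast\widetilde\mu_k\) are probability measures on \(\Hil\), so it suffices to check that
\[
\widehat{\mu_k}(\phi) \;=\; \widehat{{\iota_k}_\ast\widetilde\mu_k}(\phi)
\]
for every \(\phi\in\Hil\).

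First I would unwind the left-hand side directly from the definition: since \(\mu_k\) corresponds to \(S_k=\widetilde S_k\circ\pr_k\), one has \(\widehat{\mu_k}(\phi)=e^{-\frac{1}{2}|\widetilde S_k\pr_k\phi|^2}\). For the right-hand side I would apply the transformation formula (Theorem \ref{thm:transformation-formula-for-pushforward}) to the bounded Borel function \(g(\psi)=e^{i\langle\phi,\psi\rangle_\RR}\) on \(\Hil\), yielding
\[
\widehat{{\iota_k}_\ast\widetilde\mu_k}(\phi) \;=\; \int_{\Hil_k} e^{i\langle\phi,\iota_k(\psi)\rangle_\RR}\,d\widetilde\mu_k(\psi) \;=\; \int_{\Hil_k} e^{i\langle\pr_k\phi,\psi\rangle_\RR}\,d\widetilde\mu_k(\psi) \;=\; \widehat{\widetilde\mu_k}(\pr_k\phi),
\]
where the middle equality uses that \(\pr_k\) is the adjoint of \(\iota_k\), i.e.\ \(\langle\phi,\iota_k(\psi)\rangle_\RR=\langle\pr_k\phi,\psi\rangle_\RR\) for all \(\psi\in\Hil_k\).

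Finally, since \(\widetilde\mu_k\) corresponds to \(\widetilde S_k\) on \(\Hil_k\), the right-hand side equals \(e^{-\frac{1}{2}|\widetilde S_k\pr_k\phi|^2}\), matching the expression for \(\widehat{\mu_k}(\phi)\). Uniqueness of the Fourier transform then gives \(\mu_k={\iota_k}_\ast\widetilde\mu_k\).

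There is essentially no serious obstacle here; the only thing to verify carefully is that \(g\circ\iota_k\) is indeed \(\widetilde\mu_k\)-integrable so that Theorem \ref{thm:transformation-formula-for-pushforward} applies (which is immediate since \(|g|\equiv 1\) and \(\widetilde\mu_k\) is a probability measure), and that the splitting \(\Hil=\Hil_k\oplus\Hil_k^\perp\) makes \(\pr_k\) the Hilbert-space adjoint of \(\iota_k\) — both of which are routine. The content of the lemma is simply that the measure built by "filling the orthogonal complement with zeros" coincides with the pushforward of the truly finite-dimensional Gaussian.
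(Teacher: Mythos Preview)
Your proof is correct and essentially identical to the paper's: both compute the Fourier transform of the pushforward via Theorem~\ref{thm:transformation-formula-for-pushforward}, use that \(\pr_k\) is adjoint to \(\iota_k\) to rewrite \(\langle\phi,\iota_k(\cdot)\rangle_\RR\) as \(\langle\pr_k\phi,\cdot\rangle_\RR\), and conclude from \(S_k=\widetilde S_k\circ\pr_k\). The only cosmetic difference is that you spell out the adjoint step and the integrability check explicitly, whereas the paper leaves these implicit.
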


\begin{proof}
	It is enough to check that the Fourier transformations coincide. By Theorem \ref{thm:transformation-formula-for-pushforward},
	\begin{gather*}
		\widehat{{\iota_k}_\ast\widetilde{\mu_k}}(\psi) = \int_{\Hil} e^{i\langle\psi,.\rangle_\RR}d({\iota_k}_\ast\widetilde{\mu})  = \int_{\Hil_k} e^{i\langle\psi,\iota_k\rangle_\RR}d\widetilde{\mu} = \int_{\Hil_k} e^{i\langle\pr_k\psi,.\rangle_\RR}d\widetilde{\mu} \\= e^{-\frac{1}{2}|\widetilde{S}_k\pr_k\psi|^2} = e^{-\frac{1}{2}|S_k\psi|^2} = \widehat{\mu_k} (\psi)
	\end{gather*}
\end{proof}

Let \([\mu]\) denote the pushforward measure of \(\mu\) with respect to the canonical projection \(\pi\colon \Hil\setminus\{0\}\to \CP^\infty\) and let \(\omega_K^g\) denotes the K\"ahler form of the Fubini-Study metric corresponding to the pullback product \(g:=S^\ast\langle.,.\rangle\).

\begin{theorem}\label{thm:manifold-hyperplane-intersection-cpinfty}
	Let \(f\colon M \to \CP^\infty\) be a compact immersed oriented manifold with boundary and \(\eta\in \Omega^{m-2} M\), \(m=\dim M\). Then
	\[
		\pi\int_{\CP^\infty} \langle\icurr_{f}| \eta\rangle\, d[\mu] = \int_M f^\ast\omega^g_K\wedge \eta \,.
	\]	
\end{theorem}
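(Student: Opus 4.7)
The plan is to prove Theorem \ref{thm:manifold-hyperplane-intersection-cpinfty} by finite-dimensional approximation, reducing to Theorem \ref{thm:manifold-hyperplane-intersection-cpn}. First, by the pushforward formula (Theorem \ref{thm:transformation-formula-for-pushforward}),
\[
	\int_{\CP^\infty}\langle\icurr_f|\eta\rangle\, d[\mu] = \int_{\Hil}\langle\icurr_{f,[\psi^\perp]}|\eta\rangle\, d\mu(\psi) \,,
\]
and by Lemma \ref{lma:transition-to-finite-case} the analogous identity with $\mu_k$ replacing $\mu$ reduces to an integral against $\widetilde\mu_k$ on the finite-dimensional subspace $\Hil_k$. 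I would therefore first prove the result with $\mu_k$ in place of $\mu$, then pass to the limit.

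For $k$ sufficiently large, the composition $f_k := \pi_{\CP^{k-1}}\circ\pr_k\circ \widehat f\colon M\to \CP^{k-1}$ (where $\widehat f$ is a local unit-norm lift of $f$) is a well-defined smooth immersion, since $\pr_k\to\mathrm{id}$ strongly and $M$ is compact. The identity $\langle\widehat f(p),\iota_k\psi\rangle=\langle\pr_k\widehat f(p),\psi\rangle_{\Hil_k}$ shows that $f^{-1}[\iota_k\psi^\perp]=f_k^{-1}[\psi^\perp]$, and the orientation argument of Lemma \ref{lma:zero-intersection-relation} (applied verbatim, since it only uses that the complex-linear functional $\langle .\,,\iota_k\psi\rangle$ agrees with $\langle \pr_k(.),\psi\rangle_{\Hil_k}$) shows that the intersection currents coincide: $\icurr_{f,[\iota_k\psi^\perp]} = \icurr_{f_k,[\psi^\perp]}$. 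The Fourier characterization from Lemma \ref{lma:transition-to-finite-case} identifies $\widetilde\mu_k$ as the standard Gaussian on $\Hil_k$ equipped with the pullback inner product $g_k := \widetilde S_k^\ast\langle.,.\rangle$; by rotational invariance its pushforward to $\CP^{k-1}$ is the normalized Riemannian volume of the Fubini--Study metric induced by $g_k$. Applying Theorem \ref{thm:manifold-hyperplane-intersection-cpn} to $f_k$ in this metric and undoing the normalization then gives, for all $k$ sufficiently large,
\[
	\pi\int_{\Hil}\langle\icurr_{f,[\psi^\perp]}|\eta\rangle\, d\mu_k = \int_M f_k^\ast\omega_K^{g_k}\wedge \eta \,.
\]

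Finally, I would pass to the limit $k\to\infty$. On the right-hand side, the eigen-expansion of $\widehat f$ converges in $C^\infty(M)$ (smoothness of $f$ and compactness of $M$ as in \cite{gallot1994}), so $f_k\to f$ in $C^\infty$; combined with $g_k\to g$ on any fixed finite-dimensional subspace, this yields $f_k^\ast\omega_K^{g_k}\to f^\ast\omega_K^g$ in $C^\infty$, and the right-hand side converges to $\int_M f^\ast\omega_K^g\wedge\eta$. On the left-hand side, the function $\psi\mapsto\langle\icurr_{f,[\psi^\perp]}|\eta\rangle$ is continuous on the open dense set of hyperplanes transverse to $f$ (the complement is $\mu$-null by Sard's theorem), and the weak convergence $\mu_k\to\mu$ combined with the uniform control provided by the finite-dimensional identity allows the limit to be taken.

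The main obstacle is the left-hand-side limit: the integrand has jump discontinuities on the set of non-transverse hyperplanes and admits no a priori uniform pointwise bound, so the weak convergence $\mu_k\to\mu$ does not immediately transfer to convergence of integrals. The needed uniform integrability must be extracted from a Crofton-type estimate—controlling the $\mu_k$-expected intersection count by $\int_M f_k^\ast\omega_K^{g_k}\wedge|\eta|$, which is already uniformly bounded thanks to the finite-dimensional identity itself.
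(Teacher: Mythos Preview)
Your approach is essentially the paper's: reduce to $\mu_k$ via Lemma~\ref{lma:transition-to-finite-case}, recognise the pushforward to $\CP^{k-1}$ as normalised $g_k$-Fubini--Study volume, apply Theorem~\ref{thm:manifold-hyperplane-intersection-cpn} to a projected immersion $f_k$, and let $k\to\infty$. Two small remarks. First, your $f_k$ is written using a \emph{local} unit lift $\widehat f$; the paper avoids this by defining the global map $[\pr_k]\colon \CP^\infty\setminus(\CP^{k-1})^\perp\to\CP^{k-1}$ and using compactness of $f(M)$ (and of the unit sphere bundle in $TM$) to see that $f_k=[\pr_k]\circ f$ is a well-defined immersion for all large $k$---you should phrase it this way so that $f_k$ is global. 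Second, you are right to flag the passage $\int\langle\icurr_f|\eta\rangle\,d[\mu_k]\to\int\langle\icurr_f|\eta\rangle\,d[\mu]$ as the delicate step; the paper simply writes this limit without further comment, so your proposed Crofton-type uniform integrability bound (controlling $\int|\langle\icurr_{f_k}|\eta\rangle|$ by $\int_M |f_k^\ast\omega_K^{g_k}|\wedge|\eta|$, which is uniformly bounded) is in fact a genuine addition rather than an omission on your part.
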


\begin{proof}
	For a \([\mu_k]\)-measurable function \(\varphi\) on \(\CP^\infty\), we get by Lemma \ref{lma:transition-to-finite-case} and Theorem \ref{thm:transformation-formula-for-pushforward}
	\begin{gather*}
		\int_{\CP^\infty}\varphi\, d[\mu_k] = \int_{\Hil\setminus\{0\}} (\varphi\circ\pi)\, d\mu_k = \int_{\Hil\setminus\{0\}} (\varphi\circ\pi)\, d({\iota_k}_\ast\widetilde\mu_k) = \int_{\Hil_k\setminus\{0\}} (\varphi\circ\pi)\, d\widetilde\mu_k \,.
	\end{gather*}
	The operator \(\widetilde S_k\) is invertible and selfadjoint. Moreover,
	\[
		\int_{\Hil_k} e^{i\langle\psi,.\rangle_\RR}d{(\widetilde S^{-1}_k)}_\ast \widetilde\mu_k = \int_{\Hil_k} e^{i\langle\widetilde S_k^{-1}\psi,.\rangle_\RR}d\widetilde\mu_k = e^{-\tfrac{1}{2}|\widetilde S_k (\widetilde S_k^{-1} \psi)|^2} = e^{-\tfrac{1}{2}|\psi |^2} \,.
	\]
	 Hence \({(\widetilde S^{-1}_k)}_\ast \widetilde\mu_k\) is the standard Gaussian probabilty distribution which, in particular, is rotationally invariant. Thus its pushforward with respect to the radial projection
	 \[
	 	\rho \colon \Hil_k\setminus\{0\} \to \mathbb S^{2k-1}, \quad \psi \to \tfrac{1}{|\psi|} \psi \,.
	 \]
	 equals \(d\vol_{\mathbb S^{2k-1}}/\vol(\mathbb S^{2k-1})\). Let \(g =S^\ast\langle.,.\rangle\) and \(\mathbb S^\infty_g\subset \Hil\) the round unit sphere with respect to \(g\). Then \(S\colon \Hil \to \Hil\) restricts to an isometry \(\mathbb S^\infty_g \to \mathbb S^\infty\). Moreover, the restriction of \(g\) to \(\Hil_k\) is given by \(g_k = (\widetilde S_k)^\ast\langle.,.\rangle\). In particular, \(\mathbb S^{2k-1}_{g}= \mathbb S^\infty_g\cap \Hil_k\) is the unit sphere with respect to \(g_k\). Now
	\begin{gather*}
		\int_{\Hil_k\setminus\{0\}} (\varphi\circ\pi)\, d\widetilde\mu_k = \int_{\Hil_k\setminus\{0\}} (\varphi\circ\pi\circ \widetilde S_k)\, d(\widetilde S_k^{-1})_\ast \widetilde \mu_k = \tfrac{1}{\vol(\mathbb S^{2k-1})}\int_{\mathbb S^{2k-1}} (\varphi\circ\pi\circ \widetilde S_k)\, d\vol_{\mathbb S^{2k-1}}\\
			= \tfrac{1}{\vol(\mathbb S^{2k-1})}\int_{\mathbb S^{2k-1}_g} (\varphi\circ\pi)\, d\vol_{\mathbb S^{2k-1}_g} = \tfrac{1}{\vol(\CP^{k-1})}\int_{\CP^{k-1}_g} \varphi\, d\vol_{\CP^{k-1}_g} \,.
	\end{gather*}
	We are interested in the function \(\langle\icurr_{f}|\eta\rangle\). Suppose we can construct a sequence of immersions \(\{f_k\colon M\to \CP^k\}_{k>m}\) such that
	\[
		\langle\icurr_f|\eta\rangle|_{\CP^k} = \langle\icurr_{f_k}|\eta\rangle, \quad \lim_{k\to \infty} f_k^\ast\omega_K^g = f^\ast\omega^g_K \,.
	\]
	Then, by Theorem \ref{thm:manifold-hyperplane-intersection-cpn},
	\begin{gather*}
		\pi\int_{\CP^\infty}\langle\icurr_f| \eta\rangle\, d[\mu] =\pi \lim_{k\to \infty}\int_{\CP^\infty}\langle\icurr_f | \eta\rangle\, d[\mu_k] = \tfrac{\pi}{\vol(\CP^{k})}\lim_{k\to \infty}\int_{\CP^{k}_g} \langle\icurr_{f_k} | \eta\rangle\, d\vol_{\CP^{k}_g} \\
		= \lim_{k\to\infty}\int_M f_k^\ast \omega_K^g\wedge\eta = \int_M f^\ast\omega^g_K\wedge \eta \,.
	\end{gather*}
	It is left to show that such sequence exists. Consider the maps \([\pr_k]\colon \CP^\infty\setminus (\CP^k)^\perp \to \CP^k\), \([\psi] \mapsto [\pr_{k+1} \psi]\). Since \((\CP^k)^\perp = \cap_{i=1}^k \psi_i^\perp\) is closed, the sets 
	\[
		\CP^\infty\setminus(\CP^1)^\perp\subset \CP^\infty\setminus(\CP^2)^\perp \subset\ldots\subset \CP^\infty
	\]		
	form an open exhaustion of \(\CP^\infty\). Since \(f(M)\) is compact, \(f(M)\subset \CP^\infty\setminus(\CP^k)^\perp\) for all but finitely many \(k\). Similarly, the preimages under \(d[\pr_k]\) of the complement of the zero sections in \(\T\CP^k\) form an open exhaustion of \(\T\CP^\infty\) with its zero section removed, while the image of the sphere bundle \(\subset\T M\) under \(df\) is compact. Thus there exists \(m\in \NN\) such that \(f(M)\subset \CP^\infty\setminus(\CP^k)^\perp\) for all \(k\geq m\) and the maps \(f_k = [\pr_k]\circ f\) form a sequence of immersions.
	
	Suppose \([\psi]\in \CP^k\) and \([\psi^\perp]\) intersects \(f\) transversally. Since \(\langle \phi, \psi\rangle = \langle \phi, \pr_{k+1}\psi\rangle = \langle \pr_{k+1}\phi, \psi\rangle\), we see that \(f^{-1}([\psi^\perp]) = f_k^{-1}([\psi^\perp])\). If \(\Lambda_i\) denote the (oriented) connected components of \(f^{-1}([\psi^\perp])\) and \(\N\Lambda_i\) their oriented normal bundles, then \(\icurr_{f,[\psi]}\) is given by
	\[
		\icurr_{f,[\psi]} = \sum_i \mathrm{sign}({\det}_\RR(\pr_{\N[\psi^\perp]}\circ d_p f|_{N\Lambda_i}))\, \delta_{\Lambda_i} \,.
	\]
	 Since \(d[\pr_k]\) is complex linear we find that the signs of the determinants do not change. Thus \(\icurr_{f} = \icurr_{f_k}\). That \(\lim_{k\to \infty} f_k^\ast\omega_K^g = f^\ast\omega^g_K\) follows just from \(\lim_{k\to \infty} S_k = S\).
\end{proof}

\section{Limit cases}\label{sec:limits}

Let \(M\) be an oriented Riemannian manifold and \(E\to M\) be a Hermitian line bundle with connection \(\nabla\). To each operator \(S_t \colon \Hil \to \Hil\) of the heat semigroup of the Laplacian \(\laplace\colon \Gamma E \to \Gamma E\) we assign a Gaussian probabilty measure \(\mu_t\) with Fourier transform
\[
	\widehat \mu_t (\psi) = e^{-\tfrac{1}{2}| S_t \psi|^2} \,.
\]
\begin{lemma}\label{lma:S-action-on-mu}
	If \(t = r+s\), \(r,s>0\), then \(\mu_t = {S_r}_\ast \mu_s\). In particular, \(\mu_t(\Hil\setminus\Gamma E) = 0\).
\end{lemma}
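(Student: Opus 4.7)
The plan is to verify the identity $\mu_t = {S_r}_\ast \mu_s$ at the level of Fourier transforms and then invoke uniqueness of the characteristic functional for Borel probability measures on $\Hil$; the second statement will then drop out from the fact that $S_r$ maps $\Hil$ into $\Gamma E$.

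First I would observe that $S_r\colon \Hil \to \Hil$ is bounded (in fact compact) and selfadjoint, so it is Borel measurable and the pushforward ${S_r}_\ast \mu_s$ is a well-defined Borel probability measure on $\Hil$. Using the transformation formula (Theorem \ref{thm:transformation-formula-for-pushforward}) together with selfadjointness of $S_r$, I compute
\[
    \widehat{{S_r}_\ast \mu_s}(\psi) = \int_\Hil e^{i\langle\psi,\cdot\rangle_\RR}\,d({S_r}_\ast\mu_s) = \int_\Hil e^{i\langle\psi, S_r\,\cdot\,\rangle_\RR}\,d\mu_s = \int_\Hil e^{i\langle S_r\psi,\,\cdot\,\rangle_\RR}\,d\mu_s = \widehat{\mu_s}(S_r\psi) \,.
\]
By the definition of $\mu_s$ and the semigroup property $S_s\circ S_r = S_{r+s} = S_t$, the right-hand side equals
\[
    e^{-\frac{1}{2}|S_s S_r\psi|^2} = e^{-\frac{1}{2}|S_t\psi|^2} = \widehat{\mu_t}(\psi)\,.
\]
Since a Borel probability measure on the separable Hilbert space $\Hil$ is uniquely determined by its Fourier transform (Parthasarathy \cite{parthasarathy1972probability}, Chapter VI), we conclude $\mu_t = {S_r}_\ast \mu_s$.

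For the concluding statement, I recall from Section \ref{sec:Kodaira-corr} that the heat semigroup satisfies $S_r(\Hil)\subset \Gamma E$ for every $r>0$. Hence $S_r^{-1}(\Hil\setminus \Gamma E) = \emptyset$, and the transformation formula gives
\[
    \mu_t(\Hil\setminus\Gamma E) = ({S_r}_\ast\mu_s)(\Hil\setminus\Gamma E) = \mu_s\bigl(S_r^{-1}(\Hil\setminus\Gamma E)\bigr) = 0\,.
\]

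I do not expect any serious obstacle here: the only point requiring some care is confirming that pushforward is well-defined and that Fourier uniqueness applies in the infinite-dimensional setting, both of which are standard once one cites Parthasarathy. The key algebraic input is simply the selfadjointness of $S_r$ together with the semigroup identity $S_r S_s = S_{r+s}$.
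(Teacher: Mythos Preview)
Your proof is correct and follows essentially the same route as the paper: compute the Fourier transform of ${S_r}_\ast\mu_s$ via the transformation formula and selfadjointness of $S_r$, apply the semigroup identity, and invoke uniqueness of the characteristic functional; the second claim then follows from $S_r(\Hil)\subset\Gamma E$. The only difference is that you spell out the justifications (measurability, Parthasarathy's uniqueness theorem) a bit more explicitly than the paper does.
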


\begin{proof}
	The first part follows easily from the semigroup property, the transformation formula for pushforward measures and the fact that the Fourier transformation determines the measure uniquely: For \(t = r+s\) we have \(S_t =S_s\circ S_r\). Thus
	\[
		\int_\Hil e^{i\langle\psi,.\rangle_\RR}d({S_r}_\ast \mu_s) = \int_\Hil e^{i\langle\psi,S_r\rangle_\RR} d\mu_s = \int_\Hil e^{i\langle S_r\psi,.\rangle_\RR} d\mu_s = e^{-\tfrac{1}{2}|S_sS_r\psi|^2} = e^{-\tfrac{1}{2}|S_t\psi|^2} \,.
	\]
	Since \(S_r(\Hil)\subset \Gamma E\), we obtain \(\mu_t(\Hil\setminus\Gamma E) = \mu_s(S_r^{-1}(\Hil\setminus\Gamma E)) = \mu_s(\emptyset) = 0\).
\end{proof}

In particular Lemma \ref{lma:S-action-on-mu} yields \(\mu_t(\Gamma E) = \mu_t(\Hil) = 1\). We are interested in the expectation of the zero current of a \(\mu_t\)-randomly chosen section \(\psi\in \Gamma E\) which, by Lemma \ref{lma:zero-intersection-relation} and Theorem \ref{thm:manifold-hyperplane-intersection-cpinfty}, is related to the K\"ahler form of \(\CP^\infty\).

It is well-known that, in the finite-dimensional case, the curvature \(R^\taut\) of the tautological line bundle over complex projective space is given by \(R^\taut = 2\,\omega_K J\). As one easily checks, this holds verbatim in the infinite-dimensional case. Thus, if \(f_t \colon M \to \CP^\infty\) denotes the heat kernel embedding, the curvature \(R^t\) of the pullback bundle \(f_t^\ast\taut\) is given by
\[
	R^t = 2 f_t^\ast\omega_K J\,.
\]

\begin{theorem}\label{thm:order-vs-curvature}
	Let \(R^t\) denote the curvature of the pullback bundle \(f_t^\ast \taut\) and \(\eta \in \Omega^{m-2} M\), \(m=\dim M\). Then
	\[
		2\pi\int_{\Gamma E} \langle\zcurr|\eta\rangle\, d\mu_t = \int_M JR^t\wedge\eta \,.
	\]
\end{theorem}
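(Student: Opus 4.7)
The plan is to exploit the semigroup decomposition $S_t = S_{t/2}\circ S_{t/2}$ in order to push $\mu_t$ down to a measure on $\CP^\infty$, apply the integral-geometric identity of Theorem \ref{thm:manifold-hyperplane-intersection-cpinfty} to convert the expected zero current into the pullback of a Kähler form, and then invoke the formula $R^\taut = 2\omega_K J$ to recognize the result as an integral of $JR^t$.

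Concretely, by Lemma \ref{lma:S-action-on-mu} one has $\mu_t = (S_{t/2})_\ast\mu_{t/2}$, so the pushforward formula (Theorem \ref{thm:transformation-formula-for-pushforward}) gives
\[
\int_{\Gamma E}\langle\zcurr_\psi|\eta\rangle\, d\mu_t(\psi) = \int_{\Hil}\langle\zcurr_{S_{t/2}\phi}|\eta\rangle\, d\mu_{t/2}(\phi) .
\]
Lemma \ref{lma:zero-intersection-relation} applied at time $t/2$ replaces $\zcurr_{S_{t/2}\phi}$ by $-\icurr_{f_{t/2},[\phi^\perp]}$, and a further pushforward along $\pi\colon \Hil\setminus\{0\}\to \CP^\infty$ rewrites the right-hand side as $-\int_{\CP^\infty}\langle\icurr_{f_{t/2}}|\eta\rangle\, d[\mu_{t/2}]$. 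Using Theorem \ref{thm:manifold-hyperplane-intersection-cpinfty} with $f=f_{t/2}$ and $S=S_{t/2}$ (so that $g_{t/2} = S_{t/2}^\ast\langle.,.\rangle$) one obtains
\[
-\pi\int_{\Gamma E}\langle\zcurr|\eta\rangle\, d\mu_t = \int_M f_{t/2}^\ast\omega_K^{g_{t/2}}\wedge\eta .
\]

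The main technical step is the identification $f_{t/2}^\ast\omega_K^{g_{t/2}} = f_t^\ast\omega_K$. This follows from the factorization $f_t = [S_{t/2}]\circ f_{t/2}$, which is immediate from $\delta^t = S_{t/2}\delta^{t/2}$, together with the observation that, by construction, $S_{t/2}\colon(\Hil,g_{t/2})\to(\Hil,\langle.,.\rangle)$ is a Hermitian isometry, so the induced projective map $[S_{t/2}]$ pulls the standard Fubini--Study Kähler form $\omega_K$ back to $\omega_K^{g_{t/2}}$.

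To finish, the pullback of $R^\taut = 2\omega_K J$ to $M$ gives $R^t = 2 f_t^\ast\omega_K\, J$, hence $JR^t = -2 f_t^\ast\omega_K$ as a real $2$-form on $M$. Substituting back yields $-\pi\int\langle\zcurr|\eta\rangle\, d\mu_t = -\tfrac{1}{2}\int_M JR^t\wedge\eta$, i.e. the claimed identity. The main obstacle I anticipate is the measure-theoretic bookkeeping in the first reduction, in particular the $\mu_{t/2}$-almost-sure transversality of $[\phi^\perp]$ to $f_{t/2}$ that is needed to apply Lemma \ref{lma:zero-intersection-relation} pointwise; this should follow from a Sard-type argument applied to the projectivized normal bundle of $f_{t/2}(M)$, but it is the one place where care is required. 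Once that is in hand, the rest of the argument is essentially forced by the three preceding results.
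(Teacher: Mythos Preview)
Your argument is correct and follows essentially the same route as the paper: split $t=r+r$ with $r=t/2$, use Lemma~\ref{lma:S-action-on-mu} to write $\mu_t=(S_r)_\ast\mu_r$, convert zero currents to intersection currents via Lemma~\ref{lma:zero-intersection-relation}, apply Theorem~\ref{thm:manifold-hyperplane-intersection-cpinfty} at level $r$, and finish with the identification $f_r^\ast\omega_K^{g_r}=f_{2r}^\ast\omega_K$ together with $R^t=2f_t^\ast\omega_K\,J$. The almost-sure transversality you flag is indeed needed and is handled implicitly in the paper (non-transverse hyperplanes form a null set for $[\mu_r]$), so your caution there is appropriate but not a gap.
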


\begin{proof}
	 Let \(\omega_K^t\) denote the K\"ahler form of \(\CP^\infty\) induced by the pullback metric \(S_t^\ast\langle.,.\rangle\). In \cite{berline2003heat} it is shown that \(S_t\) satisfies all the assumptions of Theorem \ref{thm:manifold-hyperplane-intersection-cpinfty}. Thus  we have
	\[
		\pi\int_{\CP^\infty} \langle\icurr_{f_t}|\eta\rangle\, d[\mu_t] = \int_M f_t^\ast\omega^t_K \wedge \eta\,.
	\]
	From \(S_tS_t = S_{2t}\) we see that \(f_{t}^\ast\omega^t_K = f_{2t}^\ast\omega_K\). Moreover, by Lemma \ref{lma:zero-intersection-relation}, \(\icurr_{f_t}  = -\zcurr_{S_t}\). With \(r = \tfrac{t}{2}\) and Lemma \ref{lma:S-action-on-mu} we then obtain
	\begin{gather*}
		2\pi\int_{\Gamma E} \langle\zcurr|\eta\rangle\, d\mu_t = 2\pi\int_{\Gamma E} \langle\zcurr_{S_r}|\eta\rangle\, d\mu_{r} = -2\pi\int_{\CP^\infty} \langle\icurr_{f_r}|\eta\rangle\, d[\mu_r] \\
			= - 2\int_M f_r^\ast\omega^r_K\wedge\eta = - 2\int_M f_{2r}^{\,\ast}\omega_K\wedge\eta = \int_M JR^t\wedge \eta \,.
	\end{gather*}
\end{proof}

We are interested in the limit of \(R^t\) for \(t\to 0_+\). To compute this limit we use a heat kernel parametrix.

Let \(E\boxtimes E^\ast := \pi_1^\ast E \otimes \pi_2^\ast E^\ast\), where \(\pi_i\colon M\times M \to M\), \((x_1,x_2)\mapsto x_i\). In \cite{berline2003heat} is shown that there is a heat kernel, i.e. there is a section \(K_t \in \Gamma(E\boxtimes E^\ast)\) continuous over \(\RR_+\times M\times M\) such that
\[
	(e^{-t\laplace} \psi)(x) = \int_{M} K_t(x,.)\psi(.)\, d\vol_M \,.
\]
Moreover it is shown that, given a smooth cut-off function \(\rho\colon \RR_+ \to [0,1]\) such that \(\rho|_{[0,\varepsilon^2/4)}=1\),  \(\rho|_{(\varepsilon^2,\infty)}=0\), then there exist \(\Phi_i\in\Gamma(E\boxtimes E^\ast)\) such that, for every \(N>1\), the kernel \(K^N_t\) defined by
\[
	(4\pi t)^{-1/2}e^{-d(x,y)^2/4t}\rho(d(x,y)^2)j^{-1/2}(x,y)\sum_{i=0}^{N} t^i\Phi_i(x,y)
\]
is asymptotic to \(K_t\), i.e.
\[
	\Vert \partial_t^k (K_t - K_t^N)\Vert_{C^\ell} = O(t^{N-1/2-\ell/2-k}) \,,
\]

The function \(j\) is real and basically given by the determinant of the differential of \(\exp_M\): 
\[
	j(x,y) = |{\det}_\RR(d_\mathbf{x} \exp_y)|,\quad \exp_p(\mathbf x) = x \,.
\]
Moreover, the coefficients \(\Phi_i\) can be determined recursively (cf. \cite{berline2003heat}):
\begin{itemize}
	\item \(\Phi_0(x,y)\) is given by the parallel transport along the geodesic from \(y\) to \(x\)
	\item \(\Phi_i = \Phi_0 U_i\), where the coefficients in the Taylor series of \(U_i\) are polynomials in covariant derivatives of the Riemannian and the bundle curvatures at the point \(y\)
\end{itemize}
The map \(\xi \to \delta^t_\xi\) is \(\CC\)-linear and, with an appropriate scaling, yields an isomorphism \(F_t\in \Gamma\,\mathrm{Hom}(E,f_t^\ast\taut)\) of Hermitian line bundles, which allows us to transport \(f_t^\ast\nabla_\taut\) over to \(E\) where it is given by the usual formula
\[
	\nabla + F_t^{-1}\nabla^t F_t \,.
\]
Here \(\nabla^t\) denotes the induced connection on \(\Gamma\mathrm{Hom}(E,f^\ast\taut)\). When \(F_t\) is restricted to the circle bundle it takes the form 
\[
	\xi \mapsto \frac{\delta^t_\xi}{|\delta^t_\xi|} \,.
\]
\begin{theorem}\label{thm:t-to-zero-limit}
		If \(\nabla^t\) denotes the induced connection on \(\mathrm{Hom}(E,f_t^\ast\taut)\) then, for all \(\ell\geq 0\),
		\[
			\Vert F_t^{-1}\nabla^t F_t\Vert_{C^\ell} \to 0 \textit{ for } t\to 0_+ \,.
		\]
		In particular, \(\Vert R^t - R^\nabla\Vert_{C^\ell} \to 0\) for \(t\to 0_+\).
\end{theorem}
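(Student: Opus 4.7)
The plan is to reduce the pointwise computation of $F_t^{-1}\nabla^t F_t$ to the behavior of the Hermitian inner product $h_t(p):=\langle\phi_t(p),\phi_t(p_0)\rangle_\Hil$ between two unit vectors in $\Hil$, where $\phi_t:=F_t(\xi_0)$ for a unit section $\xi_0$ parallel along every geodesic issuing from a fixed base point $p_0$, and then extract the required decay from the heat kernel parametrix recalled above. Two independent cancellation mechanisms will carry the argument: Cauchy--Schwarz forces the real part of $Xh_t|_{p_0}$ to vanish exactly, while the reality of the leading Gaussian term of the parametrix forces the imaginary part to be $O(t)$.

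First I would exploit the fact that $f_t^\ast\nabla^\taut$ is the orthogonal projection of the trivial connection on $\Hil$ onto $\taut$. Together with $\nabla\xi_0|_{p_0}=0$ this gives
\[
(F_t^{-1}\nabla^t F_t)(X)(\xi_0)\big|_{p_0}=\langle X\phi_t,\phi_t(p_0)\rangle_\Hil\,\xi_0(p_0), \qquad X\in \T_{p_0}M.
\]
Self-adjointness of $S_t$ and the semigroup identity $S_t^2=S_{2t}$ yield $\langle\delta^t_{\xi_0(p)},\delta^t_{\xi_0(p_0)}\rangle_\Hil=\langle K_{2t}(p_0,p)\xi_0(p),\xi_0(p_0)\rangle_{E_{p_0}}$ and likewise for the two normalizations. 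Substituting the parametrix $K^N_{2t}$ and using that $\Phi_0$ is parallel transport while $\xi_0$ is radially parallel at $p_0$, the $i{=}0$ contribution to the numerator is $(4\pi\cdot 2t)^{-m/2}e^{-d^2/(8t)}j^{-1/2}(p_0,p)$, which is real-valued; the common Gaussian prefactor cancels between numerator and denominator, leaving
\[
h_t(p) = e^{-d(p_0,p)^2/(8t)}\,j^{-1/2}(p_0,p) + O(t)
\]
uniformly for $p$ in a neighborhood of $p_0$. Since $|h_t|\leq 1=h_t(p_0)$ by Cauchy--Schwarz one has $X|h_t|^2|_{p_0}=0$, and evaluating at the unit value $h_t(p_0)=1$ forces $\real(Xh_t)|_{p_0}=0$ exactly; as the leading term above is real, $\imaginary(Xh_t)|_{p_0}$ is contributed only by the $O(t)$ remainder. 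Hence $Xh_t|_{p_0}=O(t)\to 0$.

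To promote this to $C^\ell$ convergence, I would fix $\ell$ and choose $N$ in the parametrix large enough that the remainder bound $\|\partial_t^k(K_{2t}-K^N_{2t})\|_{C^{\ell'}}=O(t^{N-1/2-\ell'/2-k})$ dominates any polynomial blow-up in $1/\sqrt t$ coming from differentiating $e^{-d^2/(8t)}$ in the base variable; the smooth coefficients $\Phi_i$ and the function $j$ are $C^\infty$ on $M\times M$ and hence uniformly bounded with all derivatives by compactness. Iterating the real/imaginary dichotomy derivative-by-derivative while allowing $p_0$ to vary, each $C^\ell$ seminorm of $F_t^{-1}\nabla^t F_t$ becomes a finite sum of terms each $O(t)$, proving the first claim. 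The second then follows by a routine line-bundle argument: because $E$ has rank one, $\mathrm{End}(E)$ is canonically trivial and $(F_t^{-1}\nabla^t F_t)\wedge(F_t^{-1}\nabla^t F_t)=0$, so $R^t-R^\nabla = d(F_t^{-1}\nabla^t F_t)$ and applying the first claim with $\ell+1$ in place of $\ell$ finishes the argument.

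The hard part is this last propagation step: the pointwise cancellation is clean, but verifying that the real/imaginary dichotomy survives arbitrarily many base-point derivatives requires careful bookkeeping, since each derivative generates Gaussian-derivative contributions of size $t^{-k/2}$, and one must confirm that Cauchy--Schwarz together with the explicit powers of $t$ in the next parametrix coefficients outweighs these factors in every $C^\ell$ seminorm, uniformly on $M$.
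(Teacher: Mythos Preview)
Your approach is essentially the same as the paper's: both reduce the connection form to a heat-kernel inner product and then invoke the parametrix, using that the leading $\Phi_0$-contribution is parallel transport and hence real-valued, so only the $O(t)$ corrections survive. Your Cauchy--Schwarz argument for $\real(Xh_t)|_{p_0}=0$ is the same observation the paper encodes by writing $F_t^{-1}\nabla^t F_t=\eta_t J$ with $\eta_t$ real (unitarity of $F_t$), and your radially parallel gauge $\xi_0$ is a cosmetic variant of the paper's fixed unit frame $\xi$ together with $\nabla_X(\Phi_0(\cdot,p)\xi_p)=0$; the paper is equally terse on the $C^\ell$ propagation you flag at the end.
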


\begin{proof}
	Since \(M\) is compact, it is enough to show the statement locally. In particular we can assume that there exists \(\xi\in\Gamma E\) such that \(|\xi|_E = 1\). Let \(k=0\) and \(N>\ell/2+1\). Then we get \(\Vert K_t - K_t^N\Vert_{C^\ell} = O(t)\) and hence
	\[
		\delta^t_{\xi_p} = S_t \delta_{\xi_p} =\rho_t(.,p)\sum_{i=0}^{N} t^i\Phi_i(.,p)\xi_p + O(t) \,,
	\]
	where
	\[
		 \rho_t(x,y) = (4\pi t)^{-1/2}e^{-d(x,y)^2/4t}\rho(d(x,y)^2)j^{-1/2}(x,y)
	\]
	is real valued. Note that \(\rho_t(p,p) = (4\pi t)^{-1/2}\). Now, since \(F_t\) is unitary, we have
	\[
		F_t^{-1}\nabla^t F_t = \eta_t J
	\]
	for a real-valued form \(\eta_t\in \Omega^1 M\). Then
	\begin{align*}
		\eta_t & = \langle (F_t^{-1}\nabla^t F_t) \xi, J\xi\rangle_{E,\RR} = \langle(\nabla^t F_t) \xi, JF_t\xi\rangle_\RR = \langle d (F_t \xi) - F_t \nabla\xi, JF_t\xi\rangle_\RR \\& = \langle d(F_t \xi), J(F_t \xi)\rangle_\RR - \langle\nabla\xi, J\xi\rangle_{E,\RR} \,.
	\end{align*}
	Let us look at the first term: For \(X\in\T_p M\),
	\[
		\langle X(F_t \xi), J(F_t \xi)\rangle_\RR = \frac{1}{|\delta^t_{\xi_p}|^2}\langle X(\delta^t\circ\xi), J\delta^t_{\xi_p}\rangle_\RR = \frac{1}{|\delta^t_{\xi_p}|^2}X \langle\delta^t\circ\xi, J\delta^t_{\xi_p}\rangle_\RR \,.
	\]
	Now, since \(\rho_{2t}(p,p) = (8\pi t)^{-1/2}\) and \(\Phi_0(p,p)= \mathrm{Id}_{E_p}\),
	\[
		|\delta_{\xi_p}^t|^2 = \langle \delta_{\xi_p}, \delta_{\xi_p}^{2t}\rangle = \langle\xi_p, K_{2t}(p,p)\xi_p\rangle_E = (8\pi t)^{-1/2}(|\xi_p|_E^2 + O(t)) = (8\pi t)^{-1/2}(1 + O(t)) \,.
	\]
	Moreover,
	\begin{gather*}
		X\langle \delta^t\circ\xi, J\delta^t_{\xi_p}\rangle_\RR = X\langle \xi, J\delta^{2t}_{\xi_p}(.)\rangle_{E,\RR} = X\Bigl\langle \xi, J \rho_{2t}(.,p)\sum_{i=0}^{N} (2t)^i\Phi_i(.,p)\xi_p\Bigr\rangle_{E,\RR} + O(t)\\
			= \sum_{i=0}^{N} (2t)^i \bigl(\bigl\langle \nabla_X \xi, J \rho_{2t}(p,p)\Phi_i(p,p)\xi_p\bigr\rangle_{E,\RR} + \bigl\langle \xi_p, J\nabla_X\bigl(\rho_{2t}(,.p)\Phi_i(.,p)\xi_p\bigr)\bigr\rangle_{E,\RR}  + O(t) \,.
	\end{gather*}
	First note that \(X\bigl(\rho_{2t}(p,.)\bigr) = (8\pi t)^{-1/2}O(1)\). Moreover, we have \(\nabla_X\bigl(\Phi_0(.,p)\xi_p\bigr) = 0\)  and \(\langle \xi,J\xi\rangle_{E,\RR}=0\). Thus we obtain
	\[
		\bigl\langle \xi_p, J\nabla_X\bigl(\rho_{2t}(,.p)\Phi_0(.,p)\xi_p\bigr)\bigr\rangle_{E,\RR} = \bigl\langle \xi_p, J\xi_p\bigr\rangle_{E,\RR} X\bigl(\rho_{2t}(.,p)\bigr) = 0 \,.
	\]
	Furthermore,
	\[
		\langle \nabla_X \xi, J \rho_{2t}(p,p)\Phi_i(p,p)\xi_p\rangle_{E,\RR} = (8\pi t)^{-1/2}\langle \nabla_X \xi, J \xi_p\rangle_{E,\RR} \,.
	\]
	Hence
	\[
		X\langle \delta^t\circ\xi, J\delta^t_{\xi_p}\rangle_\RR = (8\pi t)^{-1/2}\Bigl(\langle \nabla_X \xi, J \xi_p\rangle_{E,\RR}+O(t)\bigr)
	\]
	and thus
	\[
		\langle X(F_t \xi), J(F_t \xi)\rangle_\RR = \langle\nabla\xi,J\xi\rangle_{E,\RR} + O(t) \,.
	\]
	Together this yields
	\[
		 \eta_t = \langle X(F_t \xi), J(F_t \xi)\rangle_\RR - \langle\nabla\xi,J\xi\rangle_{E,\RR} = O(t) \,.
	\]
	In particular, \(\Vert R^t - R^\nabla\Vert_{C^\ell} = \Vert d\eta_t\Vert_{C^\ell} \leq \Vert \eta_t\Vert_{C^{\ell+1}}\to 0\) for \(t\to 0_+\).
\end{proof}

Now we finally have everything together to prove Theorem \ref{mainthm:limits-of-zero-distributions}.

\begin{proof}[Proof of Theorem \ref{mainthm:limits-of-zero-distributions}]
	By Theorem \ref{thm:order-vs-curvature} and \ref{thm:t-to-zero-limit} we get, for \(\eta \in \Omega^{m-2}M\),
	\[
		\int_{\Gamma E} \langle \zcurr | \eta\rangle\, d\mu_t = \tfrac{1}{2\pi}\int_M JR^t\wedge \eta \to \tfrac{1}{2\pi}\int_M JR^\nabla\wedge\eta, \textit{ for } t\to 0_+ \,.
	\] 
	For the limit case \(t\to \infty\) consider a complete orthonormal system \(\{\psi_i\}_{i\in \NN}\) of eigenvectors of \(\laplace\),
	\[
		\laplace \psi_i = \lambda_i \psi_i, \quad 0\leq \lambda_0 \leq \lambda_1\leq \ldots
	\]
	then
	\[
		S_t\psi = \sum e^{-t\lambda_i}\langle\psi_i,\psi\rangle\psi_i \,.
	\]
	Let \(A_t \colon \Hil \to \Hil\) be given by \(\psi \mapsto e^{t\lambda_0}\psi\). Then \(A_tS_t = \sum e^{t(\lambda_0-\lambda_i)}\langle\psi_i,.\rangle\psi_i\) converges for \(t\to\infty\) to the orthogonal projection \(\pi_0\) to the eigenspace \(\mathrm{Eig_0} = \ker(\laplace - \lambda_0)\). Let furthermore \(\widetilde \mu_t = (A_t)_\ast\mu_t\). Then, for \(t\to\infty\),
	\[
		\int_\Hil e^{i\langle\psi,.\rangle_\RR}\, d\widetilde \mu_t = \int_\Hil e^{i\langle e^{t\lambda_0}\psi,.\rangle_\RR}\, d\mu_t = e^{-\tfrac{1}{2}|e^{t\lambda_0}S_t\psi|} \to e^{-\tfrac{1}{2}|\pi_0\psi|} =: \widetilde\mu_\infty(\psi) \,,
	\]
	which by the inclusion \(\mathrm{Eig}_0 \to \Hil\) is pushed forward to the Gaussian measure \(\widehat\mu\) of variance \(1\) on \(\mathrm{Eig}_0\). With \(\zcurr_\psi = \zcurr_{e^{t\lambda_0}\psi}\), we get
	\begin{gather*}
		\int_{\Gamma E} \langle\zcurr|\eta\rangle\, d\mu_t = \int_{\Gamma E} \langle\zcurr |\eta\rangle\,d\widetilde\mu_t
			\to \int_{\Gamma E} \langle\zcurr|\eta\rangle\,d\widetilde\mu_\infty = \int_{\mathrm{Eig}_0} \langle\zcurr | \eta\rangle \,d\widehat\mu = \langle \zcurr_0|\eta\rangle \,.
	\end{gather*}
\end{proof}

\appendix

\section{Coarea formula}\label{app:degthm}

Let \(M\) be a compact oriented \(m\)-dimensional Riemannian manifold \(M\) and \(N\) an oriented manifold of dimension \(n\). Let \(f \colon M \to N\) be a smooth map and let \(R\subset M\) denote the regular values of \(f\). Moreover, for \(y\in N\), let \(\Lambda_y: = f^{-1}\{y\}\subset M\).

For each regular value \(y\in N\) the set \(\Lambda_y\) is either empty or an \((m-n)\)-dimensional submanifold of \(M\). Although \(f\) is in general not a fibration, the rank theorem yields that at each regular points \(M\) locally has the structure of a fiber bundle with fiber being an \((m-n)\)-dimensional disk \(D\). In particular, as \(M\) and \(N\) both are oriented, we have away from singular points a canonical product orientation such that for any choice of a horizontal space \(df\) is orientation preserving. 

As \(\Lambda_y\) is covered by such local disk bundles this defines an orientation for \(\Lambda_y\) and thus a preimage current \(\pcurr_{f,y} \in (\Omega^{m-n}M)^\ast\) given by integration over \(\Lambda_y\).

\begin{theorem}\label{thm:generalized-degree-formula}
	Let \(\eta\in \Omega^{m-n}M\) and \(\omega\in \Omega^{n} N\). Then
	\[
		\int_M f^\ast\omega\wedge \eta = \int_N \langle \pcurr_{f}| \eta\rangle\, \omega \,.
	\]
\end{theorem}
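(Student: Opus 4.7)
The plan is to reduce the identity to a local Fubini computation in submersion charts on the regular locus. First, I observe that both sides are unaffected by passing to the regular values. At any critical point $p\in C := M\setminus f^{-1}(R)$, the differential $d_pf$ has rank strictly less than $n$, so the pullback of the $n$-form $\omega$ by $d_pf$ vanishes; hence $f^\ast\omega\wedge\eta$ vanishes pointwise on $C$ and
\[
\int_M f^\ast\omega\wedge\eta = \int_{f^{-1}(R)} f^\ast\omega\wedge\eta\,.
\]
On the right-hand side, Sard's theorem says $f(C)$ has measure zero in $N$, so $\int_N\langle\pcurr_f|\eta\rangle\,\omega = \int_R\langle\pcurr_{f,y}|\eta\rangle\,\omega(y)$. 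It therefore suffices to prove the identity on the open set $f^{-1}(R)$ where $f$ is a submersion.

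Next, I work in local submersion charts. Around each $p\in f^{-1}(R)$ the submersion theorem yields coordinates $(x^1,\ldots,x^n,t^1,\ldots,t^{m-n})$ on a neighborhood $U\ni p$ and $(y^1,\ldots,y^n)$ on $f(U)\subset R$, both compatible with the chosen orientations, such that $f$ takes the form $(x,t)\mapsto x$. In these coordinates $\omega = a(y)\,dy^1\wedge\cdots\wedge dy^n$ and hence $f^\ast\omega = a(x)\,dx^1\wedge\cdots\wedge dx^n$. Writing $\eta$ in the induced basis, the only summand surviving the wedge with $f^\ast\omega$ is $b(x,t)\,dt^1\wedge\cdots\wedge dt^{m-n}$, and Fubini's theorem gives
\[
\int_U f^\ast\omega\wedge\eta \;=\; \int_{f(U)} a(x)\Bigl(\int_{f^{-1}(x)\cap U} b(x,t)\,dt^1\cdots dt^{m-n}\Bigr)dx^1\cdots dx^n \;=\; \int_{f(U)}\langle\pcurr_{f,y}|_U\,|\,\eta\rangle\,\omega(y)\,,
\]
where the last equality uses that the inner integral is by definition the evaluation of $\pcurr_{f,y}|_U$ against $\eta$, because the product orientation of the chart was chosen to match the canonical orientation of $\Lambda_y$.

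I then globalize with a locally finite partition of unity $\{\rho_\alpha\}$ subordinate to a cover of $f^{-1}(R)$ by such submersion charts $U_\alpha$: applying the local identity to each $\rho_\alpha\eta$ and summing in $\alpha$ produces the desired global identity on the regular locus, and this, combined with the first paragraph, finishes the proof. The main obstacle is not analytic but a careful verification that the product orientation built from the submersion chart indeed coincides with the canonical orientation on $\Lambda_y$ introduced in the paragraph preceding the theorem; once this sign bookkeeping is done---exactly by the requirement that $df$ be orientation-preserving on a chosen horizontal complement---the rest is routine. Measurability and integrability of $y\mapsto\langle\pcurr_{f,y}|\eta\rangle$ on $R$ follow from the smooth dependence of fibers on $y$ in each chart together with the compactness of $M$, which gives a uniform bound on the relevant fiber integrals.
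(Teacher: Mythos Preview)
Your approach is essentially the same as the paper's---reduce to the regular locus, use local submersion (equivalently, disk-bundle) charts, and integrate over the fiber via a partition of unity---but there is one genuine gap in your first reduction step.

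You set $C := M\setminus f^{-1}(R)$ and conclude that $f^\ast\omega\wedge\eta$ vanishes pointwise on $C$. Your argument, however, only checks vanishing at \emph{critical points}, whereas $C = f^{-1}(N\setminus R)$ consists of all points mapping to critical \emph{values}. If $p$ is a regular point of $f$ but some other $q\in f^{-1}(f(p))$ is critical, then $p\in C$ while $d_pf$ has full rank and $(f^\ast\omega)_p$ need not be zero. The repair is short: at genuine critical points the integrand vanishes as you say; the remaining part of $C$ consists of regular points lying over the Sard null set of critical values, and since $f$ is a submersion there, in local submersion coordinates the preimage of a null set in $N$ is null in $M$. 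These two observations together justify $\int_M f^\ast\omega\wedge\eta = \int_{f^{-1}(R)} f^\ast\omega\wedge\eta$.

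The paper avoids this issue (and the bookkeeping needed to interchange an infinite sum with the integral when globalizing) by working instead with the open set of regular \emph{points} in $M$, exhausting it by compacta $M_\varepsilon$, covering each $M_\varepsilon$ by \emph{finitely} many disk-bundle charts, and taking $\varepsilon\to 0$ at the end. Once your gap is patched your route is a little more direct; the paper's compact exhaustion buys finiteness of the partition of unity at every stage, making the summation-integration interchange trivial.
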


\begin{proof}
	Let \(R\subset M\) denote the set of regular points of \(f\). Since \(R\) is open, \((M\setminus R)\subset M\) is compact. Let \(M_\varepsilon = M\setminus B_\varepsilon(M\setminus R)\), where \(B_\varepsilon(M\setminus R)\) denotes the open \(\varepsilon\)-neighborhood of \(M\setminus R\). The sets \(M_\varepsilon\) then form a compact exhaustion of \(R\) and 
	\[
		\int_M f^\ast\omega\wedge \eta = \int_R f^\ast\omega \wedge \eta= \lim_{\varepsilon \to 0} \int_{M_\varepsilon} f^\ast\omega \wedge \eta \,.
	\]
	\(M_\varepsilon\) can be covered by finitely many open sets \(U_i\subset M_\varepsilon\) such that \(U_i \to f(U_i) =: V_i\) is a \(D\)-fiber bundle, where \(D\) denotes an \((m-n)\)-dimensional disk. As \(M\) and \(N\) are both oriented, each \(U_i\) carries a canonical product orientation. Now the theorem follows by integration over the fiber using a partition of unity \(\{\rho_i\}\) subordinate to \(\{U_i\}\): We have
	\[
		\int_{M_\varepsilon} f^\ast\omega\wedge \eta = \sum_i \int_{U_i} \rho_i\,f^\ast\omega \wedge \eta = \sum_i \int_{V_i} \omega \wedge \fint_{U_i} \rho_i\,\eta = \sum_i \int_{f(M_\varepsilon)\setminus S} \omega \wedge \fint_{U_i} \rho_i\,\eta \,,
	\]
	where \(S\) denotes the null-set of singular values of \(f\) (Sard's lemma). Now, for \(y\in f(M_\varepsilon)\setminus S\),
	\[
		\Bigl(\sum_i\fint_{U_i}\rho_i\,\eta\,\,\Bigr)_y = \sum_i\int_{U_i\cap \Lambda_y}\rho_i\,\eta = \int_{\Lambda_y}\eta = \langle\pcurr_{f,y}| \eta\rangle \,.
	\]
	Now taking the limit for \(\varepsilon\to 0\) we obtain
	\[
		\int_M f^\ast\omega \wedge \eta = \int_R f^\ast\omega \wedge \eta = \int_{f(M)\setminus S} \langle\pcurr_{f}|\eta\rangle\, \omega =  \int_{N} \langle\pcurr_{f}|\eta\rangle\, \omega \,.
	\]
\end{proof}

\bibliographystyle{acm}
\bibliography{randomzerocurrents}

\end{document}